\newtheorem{theorem}{Theorem}
\newtheorem*{theorem*}{Theorem}
\newtheorem{remark}{Remark}
\newtheorem{lemma}{Lemma}
\newtheorem*{lemma*}{Lemma}
\newtheorem{definition}{Definition}
\newenvironment{preuve}{\noindent {\it Proof}}{\hfill$\square$}
\definecolor{uququq}{rgb}{0.25,0.25,0.25}
\newcommand{\ga}{\gamma }
\newcommand\ep{\varepsilon}
\newcommand{\e}{\varepsilon}
\newcommand\ph{\varphi}
\newcommand{\pp}{{\mathbb{P}}}
\newcommand{\PP}{{\mathbb{P}}}
\newcommand{\E}{{\mathbb{E}}}
\newcommand{\EE}{{\mathbb{E}}}
\newcommand{\indiq}{{\bf 1}}
\newcommand{\rr}{{\mathbb{R}}} 
\newcommand{\RR}{{\mathbb{R}}} 
\newcommand{\R}{{\mathbb{R}}}
\newcommand{\NN}{{\mathbb{N}}}
\newcommand{\N}{{\mathbb{N}}}
\newcommand{\ZZ}{\mathbb{Z}}
\newcommand{\Z}{\mathbb{Z}}
\newcommand{\cA}{{\mathcal A}}
\newcommand{\cQ}{{\mathcal Q}}
\newcommand{\cR}{{\mathcal R}}
\newcommand{\cS}{{\mathcal S}}
\newcommand{\cZ}{{\mathcal Z}}
\newcommand{\wt}{\widetilde}
\def \( {\left(}
\def \) {\right)}
\def \[ {\left\lbrack}
\def \] {\right\rbrack}
\def \llb {\left\lbrace}
\def \rrb {\right\rbrace}
\def \< {\left\langle}
\def \> {\right\rangle}
\def\sk{\smallskip}
\def\mk{\medskip}
\newcommand{\Dimh}{\mathrm{Dim}_\mathrm{H}}
\newcommand{\Dimum}{\mathrm{Dim}_\mathrm{UM}}
\newcommand{\Dimlm}{\mathrm{Dim}_\mathrm{LM}}
\newcommand{\pix}{\mathrm{pix}}
\providecommand{\proo}[1]{\{#1_t: \, t \geq 0\}}
\begin{document}
\title[Brownian sojourn inside moving boundaries]{On sojourn of Brownian motion inside moving boundaries}
\author[S. Seuret \and X. Yang]{St\'ephane Seuret \and Xiaochuan Yang}
\address{St\'ephane Seuret, Universit\'e Paris-Est\\ LAMA(UMR8050), UPEMLV, UPEC, CNRS \\ F-94010 Cr\'eteil \\France. }
\email{seuret@u-pec.fr}

\address{Xiaochuan Yang, Universit\'e Paris-Est\\ LAMA(UMR8050), UPEMLV, UPEC, CNRS \\ F-94010 Cr\'eteil, France.  {\rm Present address: Dept. Statistics \& Probability, Michigan State University, 48824 East Lansing, MI, USA}}

\email[]{xiaochuan.j.yang@gmail.com}
\date{\today}
\keywords{60G51, 28A80, 60G17: Macroscopic Hausdorff dimension, mass dimension, Brownian motion, sojourn sets.}

\begin{abstract} We investigate the large scale structure of certain sojourn sets of one dimensional Brownian motion within two-sided moving boundaries.  The macroscopic Hausdorff dimension and upper mass dimension of these sets are computed.  We also give a uniform macroscopic dimension result for the Brownian level sets.   
\end{abstract}

\maketitle

\section{Introduction}

Let $B=\proo B$ be a real-valued standard Brownian motion starting from $0$. This article is concerned with the sojourn properties of $B$ within some moving boundaries. More precisely,  for an appropriate function $\ph : \R^+\to \R^+$,   consider the sets 
\begin{align}
E(\ph) = \llb t\ge 0 :  |B_t|\le \ph(t) \rrb  \label{inside}.
\end{align} 
that we call the set of Brownian sojourn within the two-sided boundary $\ph(\cdot)$.

Besides its obvious application in physics and finance, the understanding of these sets {at} different scales entails considerable information on the path properties of the Brownian motion.  Two types of study are of particular interest.  

\mk

\noindent $\bullet$ Geometric properties  of $E(\ph)$ near $t_0=0$.  This corresponds to the regular behavior of a Brownian path near zero. Concretely, local asymptotic laws of the Brownian motion such as Khintchine's law of iterated logarithm can be described in terms of geometric properties of these sets around $0$ with specific choices for $\ph$.   A natural question is under which condition on $\ph$ these sets admit an  upper density with respect to the Lebesgue measure (denoted by $|\cdot|$ throughout the paper),  namely,  $$\limsup_{s\to 0} \frac{|E(\ph)\cap[0,s] |}{s} = c_\ph>0.$$   Uchiyama \cite{uchiyama1982} treated the case $\ph(t) = h(t)\sqrt{t}$ where $h$ is taken from  a whole class of correction functions (of logarithmic order), and he provided an integral test for $c_\ph$ to be positive.

\mk

\noindent $\bullet$ Geometric properties {of $E(\ph)$}  at infinity.  This is related to the long time behavior of the Brownian motion.  As {$B$} behaves like a square root function at infinity,  the set $E(\ph)$, when $\ph$ grows slower than the square root function, concerns the lower than normal growth of the Brownian motion.  Uchiyama \cite{uchiyama1982} considered the upper density as above for $\ph(t) = \sqrt{t}/h(t)$ with $h$ belonging to a large class of {correction functions with logarithmic order}. 

\mk
Let us mention briefly other related works. Consider
$$
\wt E(\ph) = \{ t\ge 0:  |B_t|\ge \ph(t) \}.
$$
where $\ph$ grows like a square root function with a logarithmic order correction. The geometry of these sets around zero describes the local irregular behavior of $B$, whereas their geometry around infinity describes the distribution of high peaks of $B$ - we refer the interested reader to Strassen \cite{strassen1964} and Uchiyama \cite{uchiyama1982} for upper density results (around zero and infinity). We also mention the recent work of Khoshnevisan, Kim and Xiao \cite{khoshnevisan2015multifractal} and Khoshnevisan and Xiao \cite{khoshnevisanXiao2016} who considered, among other things, the high peaks of symmetric stable L\'evy processes. {We refer to \cite{CiesielskiTaylor62, Ray63, Berman88, Bermann91, BeghinNikitinOrsingher03} for other results on sojourn properties of various stochastic processes.}

\mk
Motivated by these studies,  we focus on the asymptotic behavior around infinity of  the sojourn sets of Brownian motion within moving boundaries with \emph{much lower than normal growth}. For this, we introduce the sets 
\begin{align*}
\forall\, \ga\in[0,1/2], \quad E_\gamma: =  E(\ph_\ga)   \mbox{ with }   \ph_\ga(t) = t^{\ga}, \quad t\ge 0, 
\end{align*}
and our goal is to estimate the size of $ E_\gamma$ for all values of $\ga$. 
This size is expressed in terms of  three different quantities, including  the notion of  large scale dimensions developed by Barlow and Taylor \cite{barlow1989, barlow1992}  in the late 80's,  recently ``refreshed" in the work of Xiao and Zheng \cite{xiao2013}, Georgiou {\it et al.} \cite{georgiou},  Khoshnevisan {\it et al.} \cite{khoshnevisan2015multifractal, khoshnevisanXiao2016}.

\mk

The initial motivation of Barlow and Taylor was to define a notion of fractals in discrete spaces such as $\Z^d$ in order to characterize  the size of models in statistical physics, e.g. the infinite connected component of a percolation process, the range of a transient random walk etc.  To this end, they  introduced and investigated several notions of  dimension  describing different types of asymptotic behavior of a set around infinity.  Each dimension corresponds to an analog in large scales of a classical fractal dimension.   Among these dimensions,  we are going to use the  ``macroscopic Hausdorff dimension" and  the ``mass dimensions", which are  analogs of the classical Hausdorff dimension   and  Minkowski dimensions in large scales, respectively. 

Let us state our first main theorem.   The macroscopic Hausdorff dimension of a set $E\subset \R^d$ is denoted by $\Dimh E$,  the upper mass dimension by $\Dimum E$, and the logarithmic density by $\mathrm{Den}_{\log}$. The  definitions are recalled in Section~2.

\begin{theorem}\label{main} Almost surely,  for all $\ga\in[0,\frac 1 2]$,
\begin{align}  
\Dimh E_\gamma &= \begin{cases}  \frac{1}{2} & \mbox{ if } \ga\in[0,\frac 1 2), \\ 1 & \mbox{ if } \ga=\frac 1 2 . \end{cases} \label{dimh} \\ 
\Dimum E_\gamma & = \mathrm{Den}_{\log} E_\ga  =\frac{1}{2} + \ga. \label{dimum} 
 \end{align}
\end{theorem}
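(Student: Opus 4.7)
I will handle the three quantities in parallel, starting from a single set of sojourn time estimates on dyadic annuli and then peeling off arguments specific to each notion of dimension. Set $A_n := [2^n, 2^{n+1}]$ and $L_n(\ga) := |E_\ga \cap A_n|$. By Brownian scaling, for $t \in A_n$ and $\ga \in [0, 1/2)$,
$$
\PP(|B_t| \le t^\ga) = \PP(|B_1| \le t^{\ga - 1/2}) \asymp 2^{n(\ga - 1/2)},
$$
so $\EE[L_n(\ga)] \asymp 2^{n(1/2+\ga)}$. A second-moment computation based on the joint density of $(B_s, B_t)$ gives $\EE[L_n(\ga)^2] \le C (\EE[L_n(\ga)])^2$. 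Paley--Zygmund combined with Borel--Cantelli then yields, almost surely for $n$ large,
$$
  c\, 2^{n(1/2+\ga)} \le L_n(\ga) \le C\, 2^{n(1/2+\ga)} \log n.
$$
The monotonicity $\ga_1 \le \ga_2 \Rightarrow E_{\ga_1} \subset E_{\ga_2}$ lets me pass to a countable dense set of $\ga$ values and upgrade the estimates to hold uniformly in $\ga \in [0,1/2)$ on a single event of probability one.

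The identities $\Dimum E_\ga = \mathrm{Den}_{\log} E_\ga = 1/2 + \ga$ then follow at once from the definitions recalled in Section~2, since both quantities are read off from $\log L_n(\ga)/(n \log 2)$. For the upper bound $\Dimh E_\ga \le 1/2$ when $\ga < 1/2$, I partition $A_n$ into consecutive intervals of length $\ell_n := 2^{\lceil 2n\ga \rceil}$ and denote by $N_n$ the number of these meeting $E_\ga$. The heuristic, which I plan to justify through an excursion or local time argument, is that the number of excursions of $B$ that exit the band $[-2^{n\ga}, 2^{n\ga}]$ on $A_n$ is of order $2^{n(1/2-\ga)}$, yielding $N_n \le C\, 2^{n(1/2-\ga)} \log n$ almost surely. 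Feeding this into the Barlow--Taylor $\rho$-content gives
$$
  \nu_\rho^n(E_\ga) \le N_n \( \ell_n / 2^n \)^\rho \le C (\log n)\, 2^{n(1-2\ga)(1/2-\rho)},
$$
which is summable in $n$ precisely when $\rho > 1/2$.

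For the matching lower bound $\Dimh E_\ga \ge 1/2$, I exhibit a subset of $E_\ga$ with the required dimension. The natural candidate is the Brownian zero set $Z := \{t \ge 0 : B_t = 0\} \subset E_\ga$; the companion uniform result on Brownian level sets announced in the abstract will provide $\Dimh Z = 1/2$ almost surely, and monotonicity of $\Dimh$ under inclusion then closes the argument. The remaining case $\ga = 1/2$ is straightforward: $\PP(|B_t| \le \sqrt{t}) = \PP(|B_1| \le 1)$ is a positive constant, so the same moment scheme gives $L_n(1/2) \asymp 2^n$ almost surely, which forces all three quantities to equal $1$.

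I expect the main difficulty to sit in the macroscopic Hausdorff lower bound (the fact that $\Dimh Z \ge 1/2$): lower bounds on $\Dimh$ require a Barlow--Taylor-type mass distribution principle applied to a well-chosen Frostman measure, the natural choice being the Brownian local time at $0$, and the delicate point is verifying the right large-scale mass decay uniformly over all admissible covers of $Z \cap A_n$. A secondary technical subtlety is the uniform-in-$\ga$ formulation; monotonicity resolves it cleanly, but one must be careful in merging the exceptional null sets coming from the moment estimates and the covering step.
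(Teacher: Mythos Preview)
Your overall architecture matches the paper's: reduce the $\Dimh$ lower bound to $\Dimh\cZ_0\ge 1/2$ via local times, get the $\Dimh$ upper bound by covering $\cS_n$ with intervals of length $\asymp 2^{2n\ga}$, and handle $\Dimum$ and $\mathrm{Den}_{\log}$ by moment estimates on the sojourn time. Two points deserve correction.

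\medskip
\textbf{The Paley--Zygmund step does not give what you claim.} From $\EE[L_n(\ga)]\asymp 2^{n(1/2+\ga)}$ and $\EE[L_n(\ga)^2]\le C(\EE L_n(\ga))^2$, Paley--Zygmund yields only
\[
\PP\bigl(L_n(\ga)\ge c\,2^{n(1/2+\ga)}\bigr)\ge c'>0,
\]
and since $1-c'$ is not summable, Borel--Cantelli cannot upgrade this to ``for all large $n$''. In fact the eventual lower bound is likely false: by the arcsine law, $\PP(B\text{ has no zero on }[2^n,2^{n+1}])$ is bounded away from $0$, so for small $\ga$ one expects $L_n(\ga)$ to be atypically small infinitely often. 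The paper's route is to invoke the triviality of the Brownian tail $\sigma$-field to conclude that $L_n(\ga)\ge c\,2^{n(1/2+\ga)}$ \emph{infinitely often} almost surely; since $\Dimum$ and $\mathrm{Den}_{\log}$ are $\limsup$'s, this is exactly enough. Your $\ga=1/2$ case has the same issue and the same fix; once you have $|E_{1/2}\cap\cS_n|\ge c\,2^n$ infinitely often, positive upper density and hence $\Dimh E_{1/2}=1$ follow.

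\medskip
\textbf{The $\Dimh$ upper bound does not need an almost-sure excursion count.} The paper bypasses your heuristic bound on $N_n$ entirely: with the hitting-probability estimate (Lemma~\ref{hittingbm}) one gets directly
\[
\EE\bigl[\nu_\rho^n(E_\ga)\bigr]\le C\,2^{(1/2-\ga)(1-2\rho)n},
\]
and Fubini gives $\EE\sum_n\nu_\rho^n(E_\ga)<\infty$ for every $\rho>1/2$. No pathwise control of the number of visited blocks is required. Your excursion/local-time plan for bounding $N_n$ could presumably be made to work, but it is an unnecessary detour.

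\medskip
Apart from these two points, your identification of the local-time/mass-distribution argument as the crux of $\Dimh\cZ_0\ge 1/2$, and your use of monotonicity in $\ga$ to pass from a countable set of exponents to all $\ga\in[0,1/2]$, are both in line with the paper.
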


\begin{remark} \ 

\begin{itemize}
\item Since the standard scale of Brownian motion at time $t$ is $\sqrt{t}$ and Brownian paths fluctuate not so violently in the scale $\sqrt{t}$,  it is expected that the fractal dimension of $E_\ga$ does not depend on $\ga$ if $\ga<\frac 1 2$.   This intuition is confirmed by \eqref{dimh} and can be interpreted as such,  $B$ spends most of the time at its ``boundary" from the macroscopic Hausdorff dimension standpoint.
\item 
Although they have the same macroscopic Hausdorff dimension, the sets $E_\gamma$, $0\le \ga<1/2$, differ by their upper mass dimensions. Indeed, \eqref{dimum} indicates a multifractal behavior for $\Dimum E_\gamma$,  and  gives a natural example of sets for which the macroscopic Hausdorff dimension and the upper mass dimension differ.
\item   The Brownian motion $B$ returns inside the boundary $\ph_\ga$ infinitely many times, for every $\gamma\in [0,1/2]$, so the sets $E_\gamma$ are unbounded.  Our result   quantifies   recurrence and fluctuation properties of $B$ at large scales. 
\end{itemize}
\end{remark}

Our contribution to the formula \eqref{dimh} is the equality when $\ga\in[0,1/2)$.  The case $\ga=\frac 1 2$ is deduced from Theorem 2 in \cite{uchiyama1982} where Uchiyama obtained that 
$$\mbox{ a.s. } \ \ \ \ \limsup_{r\to+\infty}  \frac{|E(\frac 1 2)\cap[0,r]|}{r} > 0.$$  This inequality  entails that a.s. $\Dimh E(\frac 1 2) =1$ thanks to the following fact proved in \cite{khoshnevisan2015multifractal} : for any $E\subset \rr$,
$$  \limsup_{r\to+\infty}  \frac{|E\cap[0,r]|}{r} > 0\ \ \Rightarrow  \ \Dimh E = 1.$$

\mk
 
Our second result is a uniform dimension result for the level sets
$$\cZ_x= \{t\geq 0: B_t=x\}.$$ It turns out that, though much thinner than any $E_\ga$,  the level sets all have the same macroscopic Hausdorff  dimension as that of $E_\ga$, for $0<\ga<\frac 1 2$. 

\begin{theorem}\label{main2} {Almost surely, for all $x\in\RR$,  $ \Dimh(\cZ_x)=\frac 1 2$. } \end{theorem}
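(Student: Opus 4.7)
The strategy is to prove matching upper and lower bounds for $\Dimh \cZ_x$, both \emph{uniformly} in $x \in \RR$. In both halves the main technical obstacle is this uniformity: the family $\{\cZ_x\}_{x \in \RR}$ is uncountable and pairwise disjoint, so a pointwise a.s.\ argument over rational $x$ does not suffice, and one must transfer information from a countable dense set to every $x$ by continuity of the Brownian local time $L(t,x)$.

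For the upper bound $\Dimh \cZ_x \leq 1/2$, my plan is to cover $\cZ_x \cap [0, 2^n]$ at the minimal admissible scale $\delta = 1$ by those unit intervals $[k, k+1]$ whose range $[m_k, M_k] := [\min_{[k,k+1]} B,\, \max_{[k,k+1]} B]$ contains $x$. This immediately yields
\[
\nu_n^\rho(\cZ_x) \leq N_n(x) \cdot 2^{-n\rho}, \qquad N_n(x) := \#\bigl\{0 \leq k < 2^n :  x \in [m_k, M_k]\bigr\}.
\]
The key probabilistic input is the uniform-in-$x$ estimate $\sup_x N_n(x) \leq C_\omega 2^{n/2} n^c$ for $n \geq n_0(\omega)$ a.s., which I would derive by comparing $N_n(x)$ with $\sup_x L(2^n, x)$ through excursion counting, and using the Brownian scaling $\sup_x L(2^n, x) \stackrel{d}{=} 2^{n/2} \sup_y L(1, y)$ together with Borel--Cantelli over dyadic scales. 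Then $\nu_n^\rho(\cZ_x) \leq C_\omega 2^{n(1/2 - \rho)} n^c$ is summable for every $\rho > 1/2$.

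For the lower bound $\Dimh \cZ_x \geq 1/2$, I would use $\mu_x := L(\cdot, x)$ as a Frostman-type mass distribution supported on $\cZ_x$, relying on two uniform estimates. \textbf{(i) Regularity:} Perkins' joint H\"older continuity of $L(t,x)$, combined with Brownian scaling and Borel--Cantelli, yields $C_\omega < \infty$ such that $L(I, x) \leq C_\omega 2^{n\e} |I|^{1/2 - \e}$ for all $x$, all $n$, and all intervals $I \subset [0, 2^n]$. \textbf{(ii) Mass:} a.s.\ for every $x$ there exists $n_x$ such that $L(2^n, x) \geq 2^{n/2} / n^{1/2 + \delta}$ for all $n \geq n_x$; this follows from the LIL for $L(\cdot, 0)$ (via L\'evy's identity $L(\cdot, 0) \stackrel{d}{=} \sup_{s \leq \cdot} B_s$) together with the spatial H\"older in (i), which transfers the bound from $0$ to any $x$ once $n$ is large enough that $|x|$ is negligible at scale $2^{n/2}$. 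Given (i) and (ii), for any admissible cover $\{I_j\}$ of $\cZ_x \cap [0, 2^n]$ with $1 \leq |I_j| \leq 2^n$, summing $L(I_j, x) \geq L(2^n, x)$ and then interpolating $|I_j|^{1/2 - \e} \leq (2^n)^{1/2 - \e - \rho} |I_j|^\rho$ (valid for $\rho < 1/2 - \e$) leads to
\[
\nu_n^\rho(\cZ_x) \geq \frac{L(2^n, x)}{C_\omega \, 2^{n/2}} \geq \frac{1}{C_\omega \, n^{1/2 + \delta}} \qquad \text{for all } n \geq n_x.
\]
Since $\sum_n n^{-(1/2 + \delta)}$ diverges for $\delta < 1/2$, we obtain $\sum_n \nu_n^\rho(\cZ_x) = \infty$, hence $\Dimh \cZ_x \geq \rho$; letting $\rho \uparrow 1/2$ concludes.

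The hardest step is the uniform-in-$x$ mass lower bound (ii): for large $|x|$ the time needed to start accumulating local time at $x$ grows, so $n_x$ genuinely depends on $x$, and one must couple the LIL for $L(\cdot, 0)$ with Perkins' spatial H\"older control carefully enough to cover all $x$ simultaneously while still leaving a correction factor that is summable in $n$.
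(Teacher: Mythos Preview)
Your upper bound strategy is workable but unnecessarily heavy; the paper obtains $\Dimh\cZ_x\le 1/2$ in one line by observing that for every fixed $x$ one has $\cZ_x\subset E_\gamma$ for all large times and every $\gamma\in(0,1/2)$, and then invoking $\Dimh E_\gamma=1/2$.

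The lower bound, however, contains a genuine gap: your mass estimate (ii) is \emph{false}, already for $x=0$. By L\'evy's identity $L(\cdot,0)\stackrel{d}{=}S_\cdot:=\sup_{s\le\cdot}B_s$ as processes, and the lower-envelope integral test for $S$ (Hirsch's test) says that $S_t<\sqrt{t}/(\log t)^{a}$ infinitely often if and only if $\int_1^\infty t^{-1}(\log t)^{-a}\,dt=\infty$, i.e.\ iff $a\le 1$. Hence $L(2^n,0)<c\,2^{n/2}/n^{a}$ infinitely often whenever $a\le 1$. Your scheme requires \emph{both} $L(2^n,0)\ge 2^{n/2}/n^{1/2+\delta}$ for all large $n$ \emph{and} $\sum_n n^{-(1/2+\delta)}=\infty$, i.e.\ $1/2+\delta\le 1$; these two requirements are incompatible. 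More conceptually: the eventual pointwise lower bound you want would give $\nu^n_\rho(\cZ_0)\ge c/n^{a}$ for all large $n$, but the smallest $a$ for which this can hold is $a>1$, and then the series converges, yielding no information. This is precisely why the paper's proof that $\Dimh\cZ_0\ge 1/2$ does \emph{not} proceed via a pointwise bound on $L(2^n,0)$: it shows instead, by a Paley--Zygmund second-moment argument and the Kolmogorov $0$--$1$ law, that $\sum_n(L_{2^n}-L_{2^{n-1}})/\sqrt{n\,2^n}$ diverges a.s., even though individual increments may well vanish.

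There is also a smaller slip: your covers are of $\cZ_x\cap[0,2^n]$, but $\nu^n_\rho$ is defined via covers of $\cZ_x\cap\cS_n$, so the mass you pick up is $L(2^n,x)-L(2^{n-1},x)$, not $L(2^n,x)$; this increment is frequently zero.

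For the uniform-in-$x$ lower bound the paper takes a completely different route: it first proves an oscillation lemma (on every block of length $n^{3/2}$ inside $\cS_n$ the Brownian path oscillates by at least $\log\log n$), uses it to show that any good cover of $\cZ_x$ by intervals of size $\ge n^2$ must also essentially cover the relevant part of $\cZ_0$ (or vice versa), and then closes with a tail-$\sigma$-field $0$--$1$ argument over a countable dense set of levels. If you want to rescue a local-time approach, you would need to replace (ii) by a divergence statement for $\sum_n 2^{-n/2}\bigl(L(2^n,x)-L(2^{n-1},x)\bigr)/\sqrt{n}$ that holds \emph{simultaneously for all $x$}; proving this uniformly is substantially harder than the single-level case and is not what Perkins' H\"older estimate alone will give you.
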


The local structure of $\cZ_x$ is well understood since the works by Taylor and Wendel \cite{taylor1966} and Perkins \cite{perkins1981}, who proved Hausdorff measure results for $\cZ_x$ using local times for fixed $x$ and for all $x$, respectively.  In particular,  the classical Hausdorff dimension of $\cZ_0$ is $\frac 1 2$, a result already known in \cite{blumenthal1960stable}. Our result gives the large scale structure of $\cZ_x$ and  might be compared with an interesting result by Khoshnevisan \cite{khoshnevisan1994} which states that the zero set of a symmetric random walk {$\{\xi_n: \,n\in\NN\}$} in $\Z^1$   with finite variance  
 $$\{ n \in \N :  \xi_n = 0\}$$
 has macroscopic Hausdorff dimension $\frac 1 2$.

\medskip
 
This paper is organized as follows.   In Section \ref{sec2}, we recall the definition of large scale dimensions and  establish some preliminary estimates.  
Theorem \ref{main} is proved in Section \ref{sec: proof  DimH}. Theorem \ref{main2}  is proved in Section \ref{sec_main2}.

\section{Preliminaries}\label{sec2}

Throughout the paper,  $c, C$ are generic positive finite constants whose value may change from line to line.  For two  families of  positive real numbers  $(a(x))$ and $(b(x))$, the equation $a(x)\asymp b(x)$ means that the ratio $a(x)/b(x)$ is uniformly bounded from below and above by some positive finite constant independent of $x$. The set of non-negative real numbers is denoted by $\RR^+$ and the set of positive integers by $\NN^*$.  Also,  $\PP^x$ denotes the law of Brownian motion starting from $x\in\RR$, and $\EE^x$ {denotes} the expectation with respect to $\PP^x$. For simplicity, we write $\PP=\PP^0$  {and $\EE=\EE^0$}.  

\subsection{Macroscopic dimensions}
We adopt the notations in \cite{khoshnevisan2015multifractal,khoshnevisanXiao2016} that we recall now.  Set $Q(x,r) = [x,x+r)$ for all $x\in\RR$ and $r>0$.  

{Define the annuli $\forall\, n \ge 1, \cS_n = [2^{n-1}, 2^{n})$ and  $\cS_0 = [0,1)$. For any $\rho\geq 0$,  any set $E\subset \R^+$,  $n\in\N^*$, we introduce the quantity
\begin{align}
\label{defnurho}
\nu_\rho^n (E)  &=  \inf\llb \sum_{i=1}^m \( \frac{|Q_i|}{2^n}\) ^\rho :   Q_i\subset \cS_n, \ \  E\cap\cS_n \subset  \bigcup_{i=1}^m Q_i  ,\right.\\
\nonumber&   \ \ \ \ \ \ \ \ \  \ \ \ \   \mbox{ and } Q_i \mbox{ is a non-trivial interval with  integer boundaries    }  \Big\} .
\end{align}
Other gauge functions   could be used instead of $x\mapsto x^\rho$.   
 
The key constraint is that the sets  $Q_i$  used in \eqref{defnurho} are  non-trivial intervals with integer bounds. In particular, their length is always greater than 1.
 
\begin{definition} Let $E\subset \R^+$.  The macroscopic Hausdorff dimension of $E$ is defined as
\begin{equation} \label{eq_defdimh}
\Dimh E =  \inf\llb \rho\ge 0 :  \sum_{n\ge 0} \nu^n_\rho(E)<+\infty  \rrb . 
\end{equation}
\end{definition}

We move to the definition of mass dimensions and logarithmic density. For any $E\subset \RR^+$, define the pixelization of $E$ by the set of integers that are of distance at most $1$ from $E$, namely, 
\begin{align*}
\mbox{pix}(E) = \{x\in\ZZ:  \mbox{dist}(x, E)\le 1 \}.
\end{align*}

\begin{definition}
The upper and lower mass dimension of $E$ are defined as
\begin{align*}
\Dimum E = \limsup_{n\to+\infty} \frac{\log_2(\sharp\, \pix(E\cap[0,2^n])}{n}, \\
\Dimlm E = \liminf_{n\to+\infty} \frac{\log_2(\sharp\, \pix(E\cap[0,2^n])}{n}. 
\end{align*}
The logarithmic density of $E$ is defined as
\begin{align*}
\mathrm{Den_{\log}} E =  \limsup_{n\to+\infty} \frac{\log_2 |E\cap [0,2^n]|}{n}.
\end{align*}
\end{definition}
Recall that $|A|$ stands for the Lebesgue measure of $A$.
The macroscopic Hausdorff dimension of a set does not depend on  any of its  bounded subsets, since the series in \eqref{eq_defdimh} converges if and only if its tail series converges.  Further, the covering intervals are chosen to have {length} larger than $1$, which explains why the macroscopic Hausdorff dimension does not rely on the local structure of the underlying set.  The following fact  is known \cite{barlow1989,barlow1992}:  for any set $E\subset\R$,   $$\Dimh E \le \Dimlm E \le \Dimum E.$$ 
Since the cardinal of the pixelization of a set $E$ is always larger than the Lebesgue measure of $E$, we also have
$$\mathrm{Den}_{\log} E \le \Dimum E.$$

To bound  $\Dimh E$ from above,  one usually exhibits an economic covering of $E$.  To get a lower bound,    the following lemma, which is an analog of the mass distribution principle, is   useful. 
\begin{lemma}\label{mdp}  Let $E\subset\cS_n$, $n\in\NN$.  Let $\mu$ be a finite Borel measure on $\R$ with support included in $E$.  Suppose that there exists finite positive  constants $C$ and $\rho$, such that for any interval $Q\subset \cS_n$ with $|Q|\ge 1$,  one has 
$$\mu(Q) \le C |Q|^\rho.$$
Then $$\nu_\rho^n(E) \ge  C^{-1} 2^{-n\rho} \mu(\cS_n). $$
\end{lemma}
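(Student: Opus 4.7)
The plan is to adapt the classical mass distribution principle to the macroscopic setting. The argument is essentially a one-line inequality: for any admissible cover appearing in the definition of $\nu_\rho^n(E)$, I would bound its total $\rho$-mass from below using $\mu$, and then take the infimum over covers.

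In detail, I would fix an arbitrary admissible family $\{Q_i\}_{i=1}^m$ witnessing $\nu_\rho^n(E)$, so by definition each $Q_i \subset \cS_n$ is a non-trivial interval with integer endpoints and $E \cap \cS_n \subset \bigcup_{i=1}^m Q_i$. Since $\supp \mu \subset E \subset \cS_n$, subadditivity first gives $\mu(\cS_n) = \mu(E) \le \sum_{i=1}^m \mu(Q_i)$. Next, because each $Q_i$ has integer endpoints and is non-trivial, $|Q_i| \ge 1$, so the hypothesis applies term by term and yields $\mu(Q_i) \le C |Q_i|^\rho$. Combining the two estimates and factoring out $2^{n\rho}$ produces
\begin{equation*}
\sum_{i=1}^m \left(\frac{|Q_i|}{2^n}\right)^\rho \ge C^{-1} 2^{-n\rho} \mu(\cS_n),
\end{equation*}
after which passing to the infimum over admissible covers finishes the argument.

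There is no substantive obstacle here; the single point requiring attention is that the hypothesis $\mu(Q) \le C |Q|^\rho$ is only postulated for intervals of length at least $1$, so it is essential that the covering intervals permitted in the definition of $\nu_\rho^n$ always satisfy $|Q_i| \ge 1$. This is exactly what the ``non-trivial with integer endpoints'' requirement built into the definition ensures, and it is the feature that distinguishes the macroscopic setting from the classical Hausdorff one where arbitrarily small covers are allowed.
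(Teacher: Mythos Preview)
Your argument is correct and is exactly the standard mass distribution principle adapted to this macroscopic setting; the paper in fact states this lemma without proof, so there is nothing to compare against beyond noting that your write-up supplies the routine details the authors omitted. Your observation that the constraint $|Q_i|\ge 1$ built into the definition of $\nu_\rho^n$ is what makes the hypothesis applicable term by term is precisely the point.
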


In fact, there is more flexibility in the choice of covering intervals {when we are only concerned with the value of  $\Dimh E$}.  Let us introduce, for every integer $n\geq 1$ and any set $E\subset \R^+$,  the quantity 
\begin{align}
\nonumber \widetilde\nu_\rho^n (E) &=  \inf\llb \sum_{i=1}^m \( \frac{|Q_{i}|}{2^n}\) ^\rho  :     Q_{i}\subset \cS_n  , \ \ E\cap\cS_n \subset  \bigcup_{i=1}^m Q_{i} , \ \   \frac{|Q_{i}|}{n^2} \in \NN^*  , \right . \\ 
&   \ \ \ \ \ \ \ \ \  \ \ \ \   \mbox{ and } Q_i \mbox{ is an interval with  integer boundaries    }  \Big\} .
\label{defnurho2}
\end{align}
The difference with $\nu_\rho^n$ is that  coverings by sets of size that are multiple of $n^2$ are used, and this does not change the value of the macroscopic Hausdorff dimension, as stated by the following lemma.
\begin{lemma}
\label{lem13}
For every set $E\subset \R^+$,
\begin{equation} \label{eq_defdimh2}
\Dimh E =  \inf\llb \rho\ge 0 :  \sum_{n\ge 0} \widetilde \nu^n_\rho(E)<+\infty  \rrb . 
\end{equation}
\end{lemma}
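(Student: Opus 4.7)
The plan is to show the two inequalities that constitute the asserted equality. The easy half, $\Dimh E \le \inf\{\rho : \sum_n \widetilde\nu^n_\rho(E)<\infty\}$, is immediate: every covering admissible in \eqref{defnurho2} is also admissible in \eqref{defnurho}, so $\widetilde\nu^n_\rho(E) \ge \nu^n_\rho(E)$ pointwise in $n$ and $\rho$, and the inclusion of the corresponding sets of summability exponents follows.

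For the reverse inequality I would fix $\rho > \Dimh E$ and $\rho' > \rho$, and, from a near-optimal covering $\{Q_i\}_{i=1}^{m_n}$ of $E\cap \cS_n$ realizing $\nu^n_\rho(E)$ up to a constant, construct a covering admissible for $\widetilde\nu^n_{\rho'}(E)$ by replacing each $Q_i$ with the smallest enclosing interval $\widetilde Q_i$ whose endpoints are integers and whose length is a positive multiple of $n^2$. Elementary rounding yields $|\widetilde Q_i| \le |Q_i|+2n^2 \le 3\max(|Q_i|,n^2)$. A minor boundary issue—that $\widetilde Q_i$ may protrude from $\cS_n$ near its right endpoint—is handled by adding a single extra length-$n^2$ block $[2^n-n^2, 2^n)$ to the covering, contributing $(n^2/2^n)^{\rho'}$, which is summable in $n$ for any $\rho'>0$.

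The cost $\sum_i (|\widetilde Q_i|/2^n)^{\rho'}$ is then split into the large intervals ($|Q_i|\ge n^2$) and the small ones. Using $|Q_i|\le 2^n$ and $\rho'\ge\rho$, the large-interval contribution is bounded by $3^{\rho'}\nu^n_\rho(E)$. The small-interval contribution is at most $m_n(3n^2/2^n)^{\rho'}$, and since each $Q_i$ contributes at least $2^{-n\rho}$ to $\nu^n_\rho(E)$, one has $m_n \le \nu^n_\rho(E)\cdot 2^{n\rho}$, leading to the bound $3^{\rho'}\nu^n_\rho(E)\cdot n^{2\rho'}/2^{n(\rho'-\rho)}$ for this part. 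Altogether
\begin{equation*}
\widetilde\nu^n_{\rho'}(E) \le C\,\nu^n_\rho(E)\left(1+\frac{n^{2\rho'}}{2^{n(\rho'-\rho)}}\right) + \left(\frac{n^2}{2^n}\right)^{\rho'},
\end{equation*}
so $\sum_n \nu^n_\rho(E)<\infty$ implies $\sum_n \widetilde\nu^n_{\rho'}(E)<\infty$. Letting $\rho'\downarrow\rho$ and then $\rho\downarrow\Dimh E$ concludes.

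The main obstacle I anticipate is the possible proliferation of very short intervals $Q_i$ in an optimal covering: each must be inflated to length at least $n^2$, incurring a loss factor $n^{2\rho'}$, and there can be up to $\nu^n_\rho(E)\cdot 2^{n\rho}$ of them. The crux is that this polynomial-in-$n$ blow-up is exponentially dominated by the gain $2^{-n(\rho'-\rho)}$ coming from the strict inequality $\rho'>\rho$, which is precisely why one works at a slightly inflated exponent rather than at $\rho$ itself.
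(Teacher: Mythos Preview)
Your proof is correct and follows essentially the same approach as the paper: enlarge each $Q_i$ to a containing interval of length a multiple of $n^2$, split into small ($|Q_i|<n^2$) and large ($|Q_i|\ge n^2$) intervals, and absorb the polynomial-in-$n$ loss on the small intervals via the exponential gain $2^{-n(\rho'-\rho)}$ coming from working at a slightly larger exponent $\rho'>\rho$. The only cosmetic differences are that the paper bounds the small-interval contribution via $|\widetilde Q_i|\le n^2|Q_i|$ rather than via your counting argument $m_n\le 2^{n\rho}\nu^n_\rho(E)$, and that the paper separates out the case $\Dimh E=1$ (which your argument in fact covers uniformly).
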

\begin{proof}
Let us denote by $\widetilde d$ the value in the right hand-side of \eqref{eq_defdimh2}.

Obviously, for every $\rho \geq 0$, $\widetilde\nu_\rho^n (E) \geq \nu_\rho^n (E)$. Hence,  recalling \eqref{eq_defdimh},  $\Dimh E\leq \widetilde d$. {If $\Dimh(E)=1$, then \eqref{eq_defdimh2} holds trivially. Assume thus $\Dimh(E)<1$.}

Let $1>\rho'>\rho >\Dimh (E)$, and fix $n\geq 1$.  Choose  $\mathcal Q_n:=\{Q_i\}_{i=1,..., m}$ as a finite family of   intervals such that $E\cap\cS_n \subset  \bigcup_{i=1}^m Q_i$,  with $ |Q_i|\ge 1$ and $ \sum_{i=1}^m \( \frac{|Q_i|}{2^n}\) ^\rho  \leq 2\nu_\rho^n (E) $.

Define the finite family of   intervals  $\widetilde{\mathcal Q}_n:=\{\widetilde Q_i\}_{i=1,..., m}$ as follows: $\widetilde Q_i$ is an interval containing $Q_i$, included in $\mathcal{S}_n$, whose length is the smallest possible multiple of $n^2$.

Observe that if $|Q_i|\geq n^2$, $|\widetilde Q_i|\leq 2|Q_i|$, while if $1\leq |Q_i|< n^2$, $|\widetilde Q_i|\leq n^2|Q_i|$.

By construction, $E\cap \cS_n  \subset \bigcup_{i=1}^m \widetilde Q_i$. In addition,  since $\rho<\rho'<1$,
\begin{align*}
\sum_{i=1}^m \( \frac{|\widetilde Q_i|}{2^n}\) ^{\rho '} &= \sum_{i=1:|Q_i| <n^2}^m \( \frac{|\widetilde Q_i|}{2^n}\) ^{\rho '} +\sum_{i=1:|Q_i| \geq n^2}^m \( \frac{|\widetilde Q_i|}{2^n}\) ^{\rho '} \\
& \leq   \sum_{i=1:|Q_i| <n^2}^m  n^2\( \frac{|Q_i|}{2^n}\) ^{\rho '} + 2 \sum_{i=1:|Q_i| \geq 
n^2}^m \( \frac{|Q_i|}{2^n}\) ^{\rho '} \\
& \leq    \frac{n^{2+\rho'-\rho} }{2^{n(\rho'-\rho)}}  \sum_{i=1:|Q_i| <n^2}^m    \( \frac{|Q_i|}{2^n}\) ^{\rho } +2  \sum_{i=1:|Q_i| \geq 
n^2}^m \( \frac{|Q_i|}{2^n}\) ^{\rho} .
\end{align*}

When $n$ becomes large, $\frac{n^{2+\rho'-\rho} }{2^{n(\rho'-\rho)}}  \leq 1$, hence  $$\sum_{i=1}^m \( \frac{|\widetilde Q_i|}{2^n}\) ^{\rho '} \leq   {2}\sum_{i=1}^m    \( \frac{|Q_i|}{2^n}\) ^{\rho } .$$
One deduces that  $\widetilde\nu_{\rho'}^n(E) \leq 2 \nu^n_\rho (E)$. Since $\sum_{n\geq 1}\nu^n_\rho (E)<+\infty$, the series $\sum_{n\geq 1}  \widetilde\nu_{\rho'}^n(E)$ also converges, and $\widetilde d \leq \rho'$. Letting $\rho'$ tend to $\Dimh (E)$ yields the result.
\end{proof}

\begin{remark}
\label{rem_3}
{The same argument shows that for any $C\geq 1$, using coverings of a set $E\cap \cS_n$ by sets of size larger than $C$ or than $Cn^2$   instead $1$ or $n^2$} in the definitions  \eqref{defnurho} or \eqref{defnurho2},  does not change the value of the macroscopic Hausdorff dimension of $E$.
\end{remark}

\subsection{Hitting probability estimates of Brownian motion inside the moving boundaries }   The following estimate is useful when looking for an appropriate covering of $E_\gamma$ with respect to  different large scale dimensions.

\begin{lemma}\label{hittingbm}
Consider an interval $Q(a,r)$ inside $\cS_n$, i.e. $a\ge 2^{n-1}$ and $a+r\le 2^n$.  For each $0\le \ga< 1/2$,  define the event 
\begin{align}\label{eventbm} 
{\cA({n,a,r, \ga})} = \llb \exists\, t\in Q(a,r) :  |B_t|\le t^{\ga} \rrb .
\end{align}
One has
\begin{align*}
\pp \( \cA({n,a,r,\ga}) \) \le  \frac{2}{\sqrt{\pi}} 2^{n(\ga-1/2)}  +  \frac{4}{\sqrt{2\pi}} \( \frac{r}{a} \) ^{1/2}.
\end{align*}
\end{lemma}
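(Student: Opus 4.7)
The plan is to decompose the event according to the position of the Brownian motion at time $a$. Since every $t \in \cS_n$ satisfies $t^\gamma \le 2^{n\gamma}$, one has the inclusion $\cA(n,a,r,\gamma) \subset \{\exists\, t \in Q(a,r) : |B_t|\le 2^{n\gamma}\}$. With the threshold $M := 2^{n\gamma}$, I would split $\cA(n,a,r,\gamma) = \cA_1 \cup \cA_2$, where $\cA_1 = \cA(n,a,r,\gamma) \cap \{|B_a| \le M\}$ corresponds to the Brownian motion already being close to the origin at time $a$, and $\cA_2 = \cA(n,a,r,\gamma) \cap \{|B_a| > M\}$ is the complementary case, where $B$ must travel a macroscopic distance during $[a,a+r)$ to enter the strip. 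Each piece is then estimated separately.

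The bound on $\cA_1$ is immediate: the density of $B_a \sim \mathcal{N}(0,a)$ is bounded by $1/\sqrt{2\pi a}$, so together with $a \ge 2^{n-1}$ one gets
$$\PP(\cA_1) \le \PP(|B_a| \le M) \le \frac{2M}{\sqrt{2\pi a}} \le \frac{2 \cdot 2^{n\gamma}}{\sqrt{\pi \cdot 2^n}} = \frac{2}{\sqrt{\pi}}\, 2^{n(\gamma-1/2)},$$
which is exactly the first term in the claimed bound.

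For $\cA_2$, I would condition on $B_a$ and apply the Markov property at time $a$, writing $W_s = B_{a+s} - B_a$ for a Brownian motion from $0$ independent of $\cF_a$. By continuity, on the event $\{B_a = x\}$ with $x > M$, the occurrence of $\cA(n,a,r,\gamma)$ forces a downward crossing of the level $M$ during $[a,a+r)$, which rewrites as $\inf_{0 \le s \le r} W_s \le -(x - M)$. The reflection principle then supplies $\PP(\inf_{0 \le s \le r} W_s \le -y) = 2\PP(W_r \ge y)$ for $y > 0$. Integrating against the density of $B_a$, upper bounded by $1/\sqrt{2\pi a}$, doubling by the symmetry $x \leftrightarrow -x$, and invoking $\int_0^\infty \PP(W_r \ge y)\,dy = \EE W_r^+ = \sqrt{r/(2\pi)}$, one arrives at
$$\PP(\cA_2) \le \frac{4}{\sqrt{2\pi a}} \cdot \sqrt{\frac{r}{2\pi}} = \frac{2}{\pi}\sqrt{r/a} \le \frac{4}{\sqrt{2\pi}}\sqrt{r/a},$$
which produces the second term.

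The only delicate point is the combined use of the Markov property at time $a$ and the reflection principle, which turns the hitting question into a one-dimensional Gaussian tail integral; everything else is constant tracking. The slack in the last display suggests that the constant $4/\sqrt{2\pi}$ in the statement is a convenient upper bound rather than the sharp one.
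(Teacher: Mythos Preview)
Your proof is correct and follows essentially the same approach as the paper: both split on whether $|B_a|\le 2^{n\gamma}$, bound $P_1$ by a Gaussian density estimate, and handle $P_2$ via the Markov property at time $a$ together with the reflection principle for the increment process. The only difference is in the evaluation of the resulting integral: the paper splits it into two ranges and applies a Gaussian tail bound, whereas you bound the density of $B_a$ by $1/\sqrt{2\pi a}$ and integrate $\PP(W_r\ge y)$ directly to $\EE[W_r^+]=\sqrt{r/(2\pi)}$, which is slightly cleaner and in fact yields the sharper constant $2/\pi$ for the second term.
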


{Following a standard vocabulary, the event $\cA({n,a,r, \ga}) $ describes the hitting probability of $B$ inside the moving boundary $\ph_\ga$. Lemma \ref{hittingbm} states that, when $\frac{r}{a}$ is small, this  hitting probability depends only on $\gamma$ and $n$, while it behaves like  $\left(\frac{r}{a}\right)^{1/2}$ when $\frac{r}{a}$ becomes large.}  Basic properties of Brownian motion used in the proof of Lemma \ref{hittingbm} can be found in \cite{revuzyor1999} or \cite{morters2010}.

\mk
\begin{preuve}{\it  \ of Lemma \ref{hittingbm} : }  One has
\begin{align*}
\pp(\cA({n,a,r,\ga}))  &\le \pp \( \inf_{t\in Q(a,r)}|B_t| \le 2^{n\ga} \) \\
&= \pp\( |B_a| \le 2^{n\ga} \) + \pp\( |B_a| > 2^{n\ga},  \inf_{t\in Q(a,r)} |B_t| \le 2^{n\ga} \)  \\
&:= P_1+P_2. 
\end{align*}
By the self-similarity of $B$ and recalling that $a\in\cS_n$, one obtains
\begin{align*} P_1 = \pp(|B_1| \le a^{-1/2} 2^{n\ga}) \le \sqrt{\frac{2}{\pi}} a^{-1/2} 2^{n\ga} \le \frac{2}{\sqrt{\pi}} 2^{n(\ga-1/2)}. 
\end{align*} 
Using the symmetry of $B$,  one gets
\begin{align*}
\pp\( B_a > 2^{n\ga},  \inf_{t\in Q(a,r)} B_t  \le 2^{n\ga} \) =  \pp\(  B_a < - 2^{n\ga},  \sup_{t\in Q(a,r)} B_t  \ge -2^{n\ga} \) . 
\end{align*}
Thus, 
\begin{align*}
P_2 &\le 2 \pp\( B_a > 2^{n\ga},  \inf_{t\in Q(a,r)} B_t  \le 2^{n\ga} \) \\
&= 2 \pp \( B_a>2^{n\ga}, \inf_{t\in Q(a,r)} (B_t-B_a) \le 2^{2n\ga} -B_a \) .
\end{align*}
Set {$\tilde B _{h} = B_{a+h}- B_a$} which is a Brownian motion independent of $B_a$. Using successively the self-similarity, the symmetry and the Markov property yields
\begin{align*}
P_2 &\le 2 \pp\( B_a > 2^{n\ga}, \inf_{0\le h\le r} \tilde B_h \le 2^{n\ga} -B_a \) \\
&= 2 \pp\( B_a > 2^{n\ga}, \inf_{0\le h\le 1} \tilde B_h \le r^{-1/2} (2^{n\ga} -B_a) \) \\
&=2  \pp\( B_a > 2^{n\ga}, \sup_{0\le h\le 1} \tilde B_h \ge r^{-1/2} (B_a - 2^{n\ga} )  \) \\
&= 2\int_{2^{n\ga}}^{+\infty}  \pp\( \sup_{0\le h \le 1} \tilde B_h \ge r^{-1/2} (x-2^{n\ga}) \)  e^{-\frac{x^2}{2a}} \frac{dx}{\sqrt{2\pi a}} :=I_1+I_2,
\end{align*}
where $I_1$ is the integral over the interval $[2^{n\ga}, 2^{n\ga}+r^{1/2}]$, and $I_2$ over $[2^{n\ga}+r^{1/2},\infty)$.  On one hand, bounding from above the probability in the integrand by $1$, one obtains  
\begin{align*}
I_1\leq 2\int_{2^{n\ga}}^{2^{n\ga}+r^{1/2}}  e^{-\frac{x^2}{2a}}  \frac{dx}{\sqrt{2\pi a}}  \le \frac{2}{\sqrt{2\pi}} \( \frac{r}{a} \) ^{1/2} .
\end{align*}
On the other hand, one knows by applying the reflection principle to $\tilde B$ that $\sup_{0\le h \le 1} \tilde B_h$ has the same distribution as $|\tilde B_1|$. Hence, using the tail estimate for a standard normal random variable \cite[page 192]{marcusRosen2006}, one has
\begin{align*}
{I_2 \ }& \leq  4\int_{2^{n\ga}+r^{1/2}}^{+\infty} \pp\(  \tilde B_1 \ge r^{-1/2}(x-2^{n\ga}) \) e^{-\frac{x^2}{2a}} \frac{dx}{\sqrt{2\pi a}} \\
&\le 4  \int_{2^{n\ga}+r^{1/2}}^{+\infty}  \frac{1}{r^{-1/2}(x-2^{n\ga})\sqrt{2\pi}}  e^{-\frac{(x-2^{n\ga})^2}{2r}} e^{-\frac{x^2}{2a}} \frac{dx}{\sqrt{2\pi a}} \\
&\le4  \frac{1}{\sqrt{2\pi}} \int_{2^{n\ga}}^{+\infty} e^{-\frac{(x-2^{n\ga})^2}{2r}} \frac{dx}{\sqrt{2\pi a}} =  \frac{4}{\sqrt{2\pi}} \( \frac{r}{a} \) ^{1/2}  \int_{2^{n\ga}}^{+\infty}  e^{-\frac{(x-2^{n\ga})^2}{2r}} \frac{dx}{\sqrt{2\pi r}} \\
&= \frac{2}{\sqrt{2\pi}} \( \frac{r}{a} \) ^{1/2} .
\end{align*}
Therefore,  one has established that \begin{align*}
P_2 \le \frac{4}{\sqrt{2\pi}}\( \frac{r}{a}\) ^{1/2}.
\end{align*}
Combining the estimates above ends the proof. 
\end{preuve}

 \subsection{Brownian local times}\label{sec31}

{Let us recall very briefly the notion of local times.}
Let $f$ be a  non-negative continuous even function with integral one. For each $\e>0$, set $f_{\e}(x) = f(x/\e)/\e$. Define the local times at zero of $B$ by
\begin{align*}
L_t = \lim_{\e\to 0}\frac{1}{2\e} \int_0^t f_{\e}(B_s) {\rm d}s.
\end{align*}
It is known \cite[Lemma 2.4.1]{marcusRosen2006} that the convergence occurs uniformly on $t\in[0,T]$,  $\PP^y$ almost surely, for any $y\in\RR$ and $T>0$. From the definition we see that $t\mapsto L_t$ extends to a measure, denoted by $L(\mathrm{d} t)$, which is supported on the zero set of Brownian motion  \cite[Remark 3.6.2]{marcusRosen2006}.

 Next lemma gives the asymptotic behavior at infinity of the local time increments, which is the continuous analog of Corollary 2.2 in \cite{khoshnevisan1994}.   

\begin{lemma}\label{lemma: increments of LT} One has $\PP$-a.s. 
\begin{align*}
\limsup_{n\to\infty} n^{-1} \sup_{t\le 2^{n} \atop t\in\NN }\sup_{2\le h\le 2^{n-2} \atop h\in\NN} {L_{t+h}-L_t \over \sqrt{h/\log_2 h}} \le  4.
\end{align*}
\end{lemma}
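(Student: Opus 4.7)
My plan is to establish a uniform Gaussian-type tail bound for the local-time increments $L_{t+h}-L_t$, then close with a union bound over the relevant integer pairs $(t,h)$ and the Borel--Cantelli lemma.

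The first step will be to reduce to the case of Brownian motion started at $0$. Conditioning on $\cF_t$, the strong Markov property identifies the law of $L_{t+h}-L_t$ with that of $L_h$ under $\PP^{B_t}$. For an arbitrary starting point $y$, using the first hitting time $\tau=\inf\{s\ge 0 : B_s=0\}$, under $\PP^y$ the local time $L$ vanishes on $[0,\tau)$ and coincides thereafter with the local time at $0$ of the shifted Brownian motion, which starts at $0$; this shows that $L_h$ under $\PP^y$ is stochastically dominated by $L_h$ under $\PP^0$. Combining this with L\'evy's identity (which gives $L_h$ under $\PP^0$ the same distribution as $\sup_{s\le h}B_s$) and the reflection principle ($\sup_{s\le h}B_s \stackrel{d}{=} |B_h|$) yields the uniform bound
$$
\PP\bigl(L_{t+h}-L_t>x\bigr)\le 2\exp\bigl(-x^2/(2h)\bigr),\qquad x>0.
$$

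Next I would plug in $x=4n\sqrt{h/\log_2 h}$ to obtain the pointwise estimate $2\exp(-8n^2/\log_2 h)$. A union bound over the at most $2^{2n}$ integer pairs $(t,h)$ with $t\le 2^n$ and $2\le h\le 2^{n-2}$, using that $\log_2 h\le n-2$ in this range, shows that the probability of any exceedance at level $n$ is bounded by $2\cdot 2^{2n}\exp(-8n)=2\exp(-(8-2\ln 2)n)$. Since $8-2\ln 2>0$, this is summable in $n$, so Borel--Cantelli ensures that almost surely, for all sufficiently large $n$ and all relevant integer pairs $(t,h)$, one has $L_{t+h}-L_t\le 4n\sqrt{h/\log_2 h}$. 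Dividing by $n\sqrt{h/\log_2 h}$ and taking the limsup in $n$ delivers the claim.

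The only non-routine ingredient is the stochastic-domination step that allows the starting point in the Markov conditioning to be replaced by $0$; once that is in place, everything reduces to a standard Gaussian tail estimate and a numerical check. The critical threshold for the constant in front of $n\sqrt{h/\log_2 h}$ is $2\sqrt{\ln 2}\approx 1.67$, so the value $4$ in the statement sits comfortably above threshold and leaves ample margin in the Borel--Cantelli summation.
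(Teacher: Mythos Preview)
Your argument is correct and follows essentially the same route as the paper's: a Gaussian tail bound for $L_h$ via L\'evy's identity and the reflection principle, a reduction of $L_{t+h}-L_t$ to $L_h$ under $\PP^0$, a union bound over the $O(2^{2n})$ integer pairs, and Borel--Cantelli. The only cosmetic difference is in the reduction step: the paper uses the pathwise inequality $L_{t+h}-L_t\le L_h\circ\theta_{T_t}$ (with $T_t$ the first zero after $t$) and the strong Markov property at $T_t$, while you condition on $\cF_t$ and invoke stochastic domination of $L_h$ under $\PP^y$ by $L_h$ under $\PP^0$; both encode the same observation that $L$ does not increase until $B$ returns to $0$.
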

\begin{proof}
Set $S_t = \sup_{0\le s\le t} B_s$. {A famous theorem by L\'evy} \cite[page 240]{revuzyor1999} states that the processes $\proo S$ and $\proo L$ have the same law. Also, by  the reflection principle, $S_t=|B_t|$ in distribution for any fixed $t$. Therefore, the classical Gaussian tail estimate \cite[page 192]{marcusRosen2006}  yields that for all $t,x>0$,  
\begin{equation}\label{eq: tail of LT}
\PP(L_t\ge x) = \PP(|B_1|\ge x/\sqrt{t}) \le  e^{-x^2/(2t)}.
\end{equation}
For each $t\ge 0$, define $T_t = \inf\{s\ge t: B_s=0\}$ the first hitting time at zero of Brownian motion after time $t$. Since $L$ only increases when the Brownian motion hits  zero, one has for almost every sample path $\omega$ that, $L_{t+h}-L_t \le L_{T_t+h}-L_{T_t}= L_h\circ \theta_{T_t}$  for any $t,h\ge 0$, see \cite[page 402]{revuzyor1999} for the equality. Here $\theta$ is the usual shift operator on the space of continuous functions.
 
 This, combined with \eqref{eq: tail of LT} and the strong Markov property at the stopping time $T_t$,  implies that for every $x>0$, 
\begin{align*}
 \pp\left( \sup_{t\le 2^{n} \atop t\in\NN }\sup_{2\le h\le 2^{n-2} \atop h\in\NN} {L_{t+h}-L_t \over \sqrt{h/\log_2 h}} \ge x \right) 
&\le 2^{2n-2} \sup_{t\le 2^{n} \atop 2\le h\le 2^{n-2}} \PP\left({L_{t+h}-L_t \over \sqrt{h/\log_2 h}} \ge x \right) \\
&\le 2^{2n-2} \sup_{2\le h\le 2^{n-2}} \PP\left({L_h  \ge x \sqrt{h/\log_2 h}} \right) \\
&\le 2^{2n} e^{-x^2/(2n)}
\end{align*}
{Taking $x= 4n$ in the above inequality, and summing over $n\geq 1$, the Borel-Cantelli lemma yields the conclusion.}
\end{proof}


\section{Proof of Theorem \ref{main} }\label{sec: proof  DimH}

\subsection{Macroscopic Hausdorff dimension of $E_\ga$}
 We prove the dimension formula \eqref{dimh}.   As   said in the introduction, by the upper density result of Uchiyama \cite{uchiyama1982},  it is enough to compute $\Dimh E_\gamma$ for all $\ga\in [0, \frac 1 2)$.  Let $0\le \ga< \frac 1 2$ be fixed throughout this section. Due to the monotonicity in $\ga$ of the sets $E_\gamma$,   and the fact that the zero set of Brownian motion is included in $E_0$,  we divide the proof of \eqref{dimh} into two parts : 
 $$\Dimh E_\gamma \le \frac{1}{2} \ \ \mbox{ and }  \ \Dimh \cZ_0 \ge \frac{1}{2}. $$  

\subsubsection{Proof of $\Dimh E_\gamma \le \frac{1}{2}$}
A first moment argument is used for  this upper bound.  Let  $\rho>\frac 1 2$ and set 
$$x_{n,i} = 2^{n-1}+ i 2^{2n\ga} \mbox{ for } i\in\{0,\ldots, \lfloor 2^{n-1}/2^{2n\ga}\rfloor \}.$$  
Consider the intervals $Q(x_{n,i}, 2^{2n\ga})$ which form a partition of $\cS_n$.  Note that  $E_\gamma\cap Q(x_{n,i}, 2^{2n\ga}) \neq\emptyset$ if and only
if  the event $\cA({n, x_{n,i}, 2^{2n\ga}, \ga})$ occurs
(see its definition in Lemma \ref{hittingbm}). Thus,
\begin{align*}
\nu_\rho^n(E_\gamma) \le {\sum_{i=0}^{\lfloor 2^{(1-2\ga)n-1} \rfloor }} \( \frac{2^{2n\ga}}{2^n}  \) ^\rho \indiq_{\cA({n, x_{n,i}, 2^{2n\ga}, \ga})} .
\end{align*}
By choosing the length $2^{2n\ga}$,  one observes that the two terms in Lemma \ref{hittingbm} are of the same order. Taking expectation in the above inequality, one obtains by Lemma \ref{hittingbm} that there exists a positive finite constant $C$ such that for all $n\in\N^*$ 
\begin{align*}
\E[\nu^n_\rho(E_\gamma)] &\le  2^{(1-2\ga)(1-\rho)n}  \cdot C 2^{n(\ga-1/2)}  \\
&=  C 2^{(1/2 - \ga)(1-2\rho)n}.
\end{align*}
Thus, {the} Fubini Theorem entails $\E[\sum_{n=1}^\infty \nu_\rho^n(E_\gamma)] < + \infty. $  This proves that $\Dimh E_\gamma \le \rho$ almost surely.  Letting $\rho\downarrow 1/2$ yields the upper bound. 

\bigskip

\subsubsection{Proof of  $\Dimh\cZ_0\ge 1/2$} The proof   follows the idea of Khoshnevisan \cite[page 581]{khoshnevisan1994}.

\sk
Let $g(\e)= \sqrt{\e\log_2(1/\e)}$ for $0<\e\le 1/2$. By Lemma  \ref{lemma: increments of LT}, there exists a random  integer $ n_0(\omega)$ such that for all $n\ge n_0(\omega)$, 
\begin{equation}
\label{ineg2}
  \sup_{t\le 2^{n} \atop t\in\NN }\sup_{2\le h\le 2^{n-2} \atop h\in\NN} {L_{t+h}-L_t \over \sqrt{h/\log_2 h}} \leq 5n .
  \end{equation}
{Consider  any finite sequence of intervals $\{Q_{n,i}=[a_{n,i},b_{n,i}]\}_{i=1,...,m}$ }  with integer endpoints and with {sidelength} $2\le |Q_{n,i}|\le 2^{n-2}$, that form a covering of $\cZ_0\cap\cS_n$.

Using \eqref{ineg2}, one obtains
\begin{align*}
 g\left( \frac{|Q_{n,i}|}{2^n}\right) &=  \sqrt{ \frac{|Q_{n,i}|}{2^n}\log_2 \frac{2^n}{|Q_{n,i}|}}\\
 &= 2^{-n/2}\sqrt{\frac{|Q_{n,i}|}{\log_2 |Q_{n,i}|} \left( n-\log_2 |Q_{n,i}| \right) \log_2 |Q_{n,i}|} \\
 &\ge  (n-1)^{1/2} 2^{-n/2} \frac{L_{b_{n,i}} - L_{a_{n,i}}}{5n}\ge  \frac 1 6 n^{-1/2} 2^{-n/2} (L_{b_{n,i}} - L_{a_{n,i}}),
\end{align*}
where we used the elementary inequality $\sqrt{x(n-x)}\ge \sqrt{n-1}$ for all $1\le x\le n-1$.  Therefore, a.s. for all $n\ge n_0(\omega)$ and any covering of $\cZ\cap\cS_n$ by a finite family of intervals  $(Q_{n,i})_{i=1,...,m}$, 
$$
\sum_{i=1}^m  g\left( \frac{|Q_{n,i}|}{2^n}\right) \ge \frac{L(\cup_i Q_{n,i})}{6\sqrt{n 2^{n}} } = \frac{  L_{2^n} - L_{2^{n-1}}}{6\sqrt{n 2^{n}}},
$$
where the last equality follows from the fact that the intervals $Q_{n,i}$ form a covering of the support of $L(dt)$ in $\cS_n$.
The desired lower bound for $\Dimh\cZ_0$ follows if we can show that a.s. 
\begin{align}\label{eq: F_N}
F_N = \sum_{n=1}^N  \frac{L_{2^n} - L_{2^{n-1}}}{\sqrt{n 2^{n}} }
\end{align}
diverges as $N\to +\infty$. {Indeed, this would demonstrate that  the sum \eqref{eq_defdimh} diverges for every $\rho<1/2$ and for any choice of covering.}

 To this end, let us use a second moment argument. As for any fixed $t$, $L_t=|B_t|$ in distribution, one knows that  
\begin{align}
\label{ineg7}
\EE[L_t] = c\sqrt{t} \mbox{ and }  \E[(L_t)^2]=t. 
\end{align}

{An Abel summation manipulation} gives
\begin{align*}
F_N =  \frac{L_{2^N} }{\sqrt{N 2^{N}} } - \frac{L_2 }{\sqrt 2}+ \sum_{n=2}^{N} L_{2^n}{\left( \frac{1}{\sqrt{n2^n}} - \frac{1} {\sqrt{(n+1)2^{n+1}}}\right). }
\end{align*}
Thus, 
\begin{align*}
\E[F_N] \asymp \sqrt{N}.
\end{align*}
By Cauchy-Schwarz inequality, $\E[F_N^2]\ge cN$. To deduce an upper bound for $\E[F_N^2]$,  we use  the expression \eqref{eq: F_N}  {and consider the expectation of the double sum $F_N^2$.} Recall that $T_t$ is the first time  the Brownian motion hits zero after time $t$ and  $L_{t+h}-L_t \le L_{T_t+h}-L_{T_t}= L_h\circ \theta_{T_t}$.   {For $j<\ell$,  applying the strong Markov property at $T_{2^{\ell -1}}$, then at $T_{2^{j-1}}$,  implies  
\begin{align*}
\E[(L_{2^j}-L_{2^{j-1}})(L_{2^{\ell}}-L_{2^{\ell-1}})] \le \E[ (L_{2^j}-L_{2^{j-1}})] \E[L_{2^{\ell -1}}] \le \E[L_{2^{j-1}}]\E[L_{2^{\ell -1}}].
\end{align*} Also, we have
\begin{align*}
\E[(L_{2^n}-L_{2^{n-1}})^2] \le \E[(L_{2^{n-1}})^2].
\end{align*}}
Hence,  we obtain using \eqref{eq: F_N} and \eqref{ineg7}
\begin{align*}
\E[F^2_N] &= \sum_{n=1}^N   {\frac{\E[(L_{2^n}-L_{2^{n-1}})^2]}{n2^n}}+ 2\sum_{\ell=1}^N\sum_{j=1}^{\ell-1}\frac{\E[(L_{2^j}-L_{2^{j-1}})(L_{2^{\ell}}-L_{2^{\ell-1}})]}{\sqrt{j2^j\ell2^\ell}} \\
&\leq  \sum_{n=1}^N  \frac{1}{n} + 2c   \sum_{\ell =1}^{N}\sum_{j=1}^{\ell-1} (j\ell)^{-1/2} \le 3cN.
\end{align*}
Therefore, the Paley-Zygmund inequality yields that there are constants $c, c'>0$ for which for every $N\geq 1$, 
\begin{align*}
\PP(F_N\ge c\sqrt{N}) \ge c'.
\end{align*}
This proves  $\PP(F_\infty=\infty)>0$. By Kolmogorov's zero-one law, $F_\infty=\infty$ almost surely, which completes the proof.

\subsection{Upper mass dimension and logarithmic density of $E_\ga$}

In order to prove that $\mathrm{Den}_{\log} E_\ga =\Dimum E_\gamma=\frac{1}{2}+\ga$, by the relation $\mathrm{Den}_{\log} E \le \Dimum E$ true  for all set $E\subset\RR^+$, it is enough   to prove
$$ \Dimum E_\gamma\leq \frac{1}{2}+\ga \ \ \mbox{ and } \ \ \mathrm{Den}_{\log} E_\ga\ge \frac 1 2 + \ga.$$

\subsubsection{Proof of $\Dimum E_\gamma\leq \frac{1}{2}+\ga$}
 
 We still combine the first moment argument with Lemma \ref{hittingbm}.  For each $\cS_n$,  consider $Q(k,1)\subset \cS_n$ with $k\in\NN$. Observe that $k\in \pix (E_\ga)$ implies that $B$ is within the boundary $E_\ga$ on one of the time intervals $[k-1,k)$ and $[k,k+1)$.  Therefore,
\begin{align*}
\sharp\, \pix(E_\gamma\cap[0,2^n])  \le 2\sum_{p=0}^{n} \sum_{k=2^{p-1}}^{2^p -1}   \indiq_{\cA({p, k,1, \ga})}. 
\end{align*}
Taking expectation,  one obtains by Lemma \ref{hittingbm} that for some finite positive constant $C$, 
\begin{align*}
\E[\sharp\,\pix(E_\gamma\cap[0,2^n])] \le C  \sum_{p=0}^n 2^p 2^{p(\ga -1/2)} = C 2^{n(\ga+1/2)}.
\end{align*}
For $\rho>1/2+\ga$,  the Markov inequality yields that for all $n\ge 1$,
\begin{align*}
\pp\( \sharp\,\pix(E_\gamma\cap[0,2^n]) \ge 2^{n\rho}\)  \le C^{-1}  2^{-n(\rho - \ga-1/2)} 
\end{align*}
which is the general term of a convergent series.  Thus,  an application of Borel-Cantelli Lemma gives almost surely, for all $n$ large enough,  
\begin{align*}
\sharp\,\pix(E_\gamma\cap[0,2^n]) < 2^{n\rho}.  
\end{align*}
Hence, $\Dimum E_\gamma \le \rho$, almost surely.  The desired upper bound follows by letting $\rho$ tend to  $\ga+1/2$.

\subsubsection{Proof of $\mathrm{Den}_{\log} E_\ga\ge \frac 1 2 + \ga$}

Define the sojourn times
 \begin{align*}
 M(t) = \big| \{0<s<t:  |B(s)|\le s^\ga\} \big|,    \quad t> 0.
 \end{align*}
We need to show that
\begin{align}\label{eq:lower}
\limsup_{n\to\infty}  \frac{\log_2 M(2^n)}{n} \ge \ga + \frac 1 2,  \quad \mbox{ a.s. }
\end{align}

We apply a second moment argument.  By Fubini's Theorem, we see that
 \begin{align}\label{eq:1stmoment}
 \EE [M(t)] = \int_0^t \PP(|B(s)|\le s^{\ga}) ds = \int_0^t \PP(|B_1|\le s^{\ga-\frac 1 2})  ds \asymp t^{\ga+\frac 1 2},  \quad t\to + \infty. 
 \end{align}
To conclude, we shall use the following fact yet to be proved: 
\begin{align}\label{eq:2ndmoment}
\limsup_{t\to\infty} \EE\Big[ \left( M(t) t^{-(\ga+\frac 1 2)} \right)^2\Big]<\infty. 
\end{align}
This, combined with \eqref{eq:1stmoment} and the Paley-Zygmund inequality,  leads to
\begin{align*}
\PP(M(t)\ge c\,t^{\ga + \frac 1 2})> c'
\end{align*} 
for all $n$ large enough with some positive constants $c, c'$.  Owing to the triviality of the tail $\sigma$-field of Brownian motion, it follows that 
\begin{align*}
\PP(M(2^n)\ge c 2^{n(\ga + \frac 1 2)} \mbox{ infinitely often}) =1,
\end{align*}
which implies that
\begin{align*}
\limsup_{n\to\infty} \frac{M(2^n)}{2^{n(\ga + \frac 1 2)}}>0, \quad \mbox{ a.s.}
\end{align*}
The desired lower bound \eqref{eq:lower} follows.  

\mk
We end this section with the proof of \eqref{eq:2ndmoment}.  By Fubini's Theorem,
\begin{align}
\EE[M(t)^2] &= 2\int_0^t \int_0^{t-a} \PP\big(|B_a|\le a^{\ga}, |B_{a+b}|\le (a+b)^\ga\big) db\,da \nonumber \\
&\le 2\int_0^t \int_0^{t} \PP\big(|B_a|\le a^{\ga}, |B_{a+b}-B_a|\le 2(a+b)^\ga\big) db\,da \nonumber\\
&= 2\int_0^t \int_0^{t} \PP(|B_a|\le a^{\ga}) \PP\big(|B_b|\le 2(a+b)^\ga\big) db\,da \label{eq:sec3_last}
\end{align}
Note that for any $a>1, b>0$, one has $\PP(|B_a|\le a^\ga)\asymp a^{\ga - \frac 1 2}$ and 
\begin{align*}
\PP\big(|B_b|\le 2(a+b)^\ga\big) \asymp \begin{cases}
1, & \mbox{ if }  b<a^{2\ga}, \\
a^\ga b^{-\frac 1 2}, & \mbox{ if } a^{2\ga}\le b< a, \\
b^{\ga - \frac 1 2}, & \mbox{ if } b\ge a. 
\end{cases}
\end{align*}
Applying this estimate in \eqref{eq:sec3_last}, we arrive at
\begin{align*}
\EE[M(t)^2] \le C (t^{3\ga+\frac 1 2} + t^{2\ga+1} + t^{2\ga+1}) \le C t^{2\ga + 1},
\end{align*}
where we used $\ga<\frac 1 2$ in the second inequality. The proof of \eqref{dimum} is now complete.


\section{Proof of Theorem \ref{main2} :  dimension of all level sets}
\label{sec_main2}

In the previous section, we have proved that almost surely, $\Dimh(\cZ_0) =1/2$. We are going to prove that almost surely,  for every $x\in \R$, $\Dimh(\cZ_x) =1/2$.

For this, we start with the following simple lemma. For any function $f:\R^+\to \R$ and any interval $I\subset \R^+$, the oscillation of $f$ on $I$ is written
\newcommand\osc{\mbox{Osc}}
$$\osc_{I}(f) = \sup_If-\inf_If.$$

\begin{lemma}
\label{lem12}
With probability one, there exists an integer $N\geq 1$ such that for every $n\geq N$, for any integer $\ell\in   [2^{n-1}, 2^{n}-n^{3/2}]$, there exists at least one interval among the consecutive intervals $\{I_k:=[\ell+k,\ell+k+1]\}_{k=0,...,n^{3/2}-1}$ such that   
$\mathrm{Osc}_{I_k}(B)$ is larger than $ \log \log n$.
\end{lemma}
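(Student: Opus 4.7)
The plan is to apply a Borel-Cantelli argument exploiting the independence of Brownian increments on disjoint intervals. For any fixed $n$ and $\ell$, the segments of the Brownian path on the disjoint unit intervals $I_k=[\ell+k,\ell+k+1]$, $k=0,\ldots, \lfloor n^{3/2}\rfloor-1$, are independent and each distributed (after recentering) as a standard Brownian motion on $[0,1]$. Hence the oscillations $\mathrm{Osc}_{I_k}(B)$ form an i.i.d. family with the common distribution of $\mathrm{Osc}_{[0,1]}(B)$.

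Setting $p_n := \PP(\mathrm{Osc}_{[0,1]}(B) > \log\log n)$, independence gives
\begin{equation*}
\PP\bigl(\forall k,\ \mathrm{Osc}_{I_k}(B) \leq \log\log n\bigr) = (1-p_n)^{\lfloor n^{3/2}\rfloor} \leq e^{-p_n \lfloor n^{3/2}\rfloor}.
\end{equation*}
To bound $p_n$ from below, I would use the elementary inequality $\mathrm{Osc}_{[0,1]}(B) \geq |B_1|$ together with the classical Gaussian tail estimate $\PP(|B_1|>x) \geq c\, x^{-1} e^{-x^2/2}$ valid for large $x$. Taking $x = \log\log n$ yields, for $n$ large enough,
\begin{equation*}
p_n \geq \frac{c}{\log\log n}\, e^{-(\log\log n)^2/2}.
\end{equation*}
Since $(\log\log n)^2 = o(\log n)$, one has $p_n \geq n^{-1/4}$ for $n$ sufficiently large, which implies $p_n \lfloor n^{3/2}\rfloor \geq n^{5/4}$, and thus $\PP(\forall k,\ \mathrm{Osc}_{I_k}(B) \leq \log\log n) \leq e^{-n^{5/4}}$.

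A union bound over the at most $2^{n-1}$ integer values of $\ell\in[2^{n-1},2^n-n^{3/2}]$ then gives
\begin{equation*}
\PP\bigl(\exists\,\ell,\ \forall\, k,\ \mathrm{Osc}_{I_k}(B) \leq \log\log n\bigr) \leq 2^n\, e^{-n^{5/4}},
\end{equation*}
which is the general term of a convergent series. A direct application of the Borel-Cantelli lemma will then produce the almost surely finite random integer $N$ required by the statement. No genuine obstacle is expected: the key point to check is that the window size $n^{3/2}$, combined with the slow (sub-polynomial) decay of $p_n$, ensures that $p_n \lfloor n^{3/2}\rfloor$ dominates $n\log 2$, so that the union bound closes.
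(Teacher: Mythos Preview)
Your proposal is correct and follows essentially the same approach as the paper: exploit the independence and identical distribution of the oscillations on disjoint unit intervals, use the Gaussian tail to bound $p_n$ from below, and conclude by Borel--Cantelli. The only cosmetic difference is that the paper partitions $\cS_n$ into disjoint blocks of $\lfloor n^{3/2}/2\rfloor$ consecutive unit intervals and computes the probability that some block is entirely ``bad'' via independence across blocks, whereas you take a direct union bound over all starting integers $\ell$; your route is arguably cleaner and the resulting summable bound $2^n e^{-n^{5/4}}$ is more explicit.
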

\begin{proof}
Fix $n\geq 1$.
For any integer interval $[k,k+1]\subset \mathcal{S}_n$, let us denote $\tilde p_n=\mathbb{P}(\osc_{[k,k+1]}(B)\geq  \log \log n) >0$. A standard estimate shows that $\tilde p_n =   C_n e^{-( \log \log n)^2/2}/ \log \log n$,  where $C_n$ is a constant uniformly bounded by above and below with respect to $n$.

The Markov property implies that the oscillation of Brownian motion on non-overlapping unit  intervals are independent and identically distributed. Therefore, for any sequence of $\lfloor n^{3/2}/2 \rfloor$ consecutive intervals $I_1$, ..., $I_{\lfloor n^{3/2} /2 \rfloor}$ of length 1, the probability that simultaneously the oscillation of $B$ on each of these intervals   is less than $\log\log n$ is
$p_n:= (1-\tilde p_n)^{\lfloor n^{3/2}/2 \rfloor}.$ One sees that 
$
p_n\asymp e^{ -C_n  n^{3/2 }   e^{-( \log \log n)^2/2}/ \log \log n},
$
 hence goes fast to 0 ($C_n$ is another constant, still bounded away from 0 and $\infty$).

There are $C 2^n/{\lfloor n^{3/2}/2   \rfloor}$  disjoint  sequences of consecutive ${\lfloor n^{3/2} /2 \rfloor}$ integer intervals included in $\mathcal{S}_n$. Hence, by independence, the probability that there is at least one such sequence of  ${\lfloor n^{3/2} /2 \rfloor}$ consecutive intervals such that the oscillation of $B$ on all these   intervals is less than $\log\log n$ is
\begin{equation}
\label{estimpn}
\hat p_n := 1-(1-p_n)^{C2^n/{\lfloor n^{3/2}/2  \rfloor}},
\end{equation}
 which tends exponentially fast to 1. The Borel-Cantelli lemma yields that  there exists some (random) integer $N$ such that for every $n\geq N$, there is always an interval $I$ of length 1 in the ${\lfloor n^{3/2} /2 \rfloor}$ consecutive intervals such that the oscillation of $B$ on $I$ is larger than $\log\log n$. 

As a conclusion, with probability one, every interval of length $n^{3/2}$  (which contains necessarily ${\lfloor n^{3/2} /2 \rfloor}$ consecutive intervals  above) in $\mathcal{S}_n $ contains a subinterval of length 1 on which the oscillation of $B$ is larger than $\log\log n$.
\end{proof}

We also need the following simple fact.

\begin{lemma}\label{lem:6}
Let $E\subset\RR^+$.  One has
\begin{align*}
\Dimh E  = \inf{\Dimh \left(\bigcup_{n\geq 1}\bigcup_{R\in\cR_n} R \right)},
\end{align*}
where the infimum is taken over all families of intervals $(\cR_n)_{n\ge 1}$ such that each  $\cR_n=\{R_{n,i}\}_i$ covers $E\cap\cS_n$ with intervals  of length multiple of $n^2$ with integer boundaries.
\end{lemma}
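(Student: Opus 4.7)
The plan is to prove two inequalities, and to leverage Lemma \ref{lem13} for the non-trivial direction.

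First, the inequality $\Dimh E \leq \inf \Dimh\big(\bigcup_{n\geq 1}\bigcup_{R\in\cR_n} R\big)$ is immediate from the monotonicity of $\Dimh$: for any admissible family $(\cR_n)_{n\ge 1}$, the set $F := \bigcup_{n \ge 1} \bigcup_{R \in \cR_n} R$ contains $E$ (because $\cR_n$ covers $E \cap \cS_n$ for every $n$, and the $\cS_n$ partition $\R^+$), hence $\Dimh E \le \Dimh F$, and one takes the infimum over $(\cR_n)$.

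The substantive direction is the reverse inequality. Fix any $\rho > \Dimh E$. By Lemma \ref{lem13}, $\sum_{n \ge 1} \widetilde{\nu}_\rho^n(E) < +\infty$. For each $n$, select a near-optimal admissible cover $\cR_n = \{R_{n,i}\}_i$ of $E \cap \cS_n$ realizing
$$\sum_i \left( \frac{|R_{n,i}|}{2^n} \right)^\rho \le \widetilde{\nu}_\rho^n(E) + 2^{-n},$$
where each $R_{n,i} \subset \cS_n$ is an interval with integer boundaries and length a positive multiple of $n^2$. By construction, $\sum_{n\ge 1}\sum_i (|R_{n,i}|/2^n)^\rho < +\infty$.

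Now set $F := \bigcup_{n\ge 1}\bigcup_{R\in\cR_n} R$. Since $R_{n,i} \subset \cS_n$ and the annuli $\cS_n$ are pairwise disjoint, one has $F \cap \cS_n = \bigcup_i R_{n,i}$, so $\cR_n$ is itself an admissible cover of $F \cap \cS_n$. This yields
$$\widetilde{\nu}_\rho^n(F) \le \sum_i \left( \frac{|R_{n,i}|}{2^n} \right)^\rho,$$
and summing over $n$ shows $\sum_n \widetilde{\nu}_\rho^n(F) < +\infty$. Applying Lemma \ref{lem13} to $F$, we conclude $\Dimh F \le \rho$, hence $\inf \Dimh(\bigcup_n \bigcup_{R \in \cR_n} R) \le \rho$. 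Letting $\rho \downarrow \Dimh E$ closes the loop.

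The only delicate point — and the reason to insist on $R_{n,i} \subset \cS_n$ — is the identity $F \cap \cS_n = \bigcup_{R \in \cR_n} R$, which prevents pieces of covers from different scales $m \neq n$ from interfering when recomputing $\widetilde{\nu}_\rho^n(F)$. Everything else is a direct application of Lemma \ref{lem13}.
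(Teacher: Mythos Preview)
Your proof is correct and follows essentially the same approach as the paper's: monotonicity for one inequality, and for the reverse, invoking Lemma \ref{lem13} to produce a near-optimal covering $(\cR_n)$ by intervals of length a multiple of $n^2$ inside $\cS_n$, then observing that this same covering witnesses $\Dimh F \le \rho$. Your version is more explicit than the paper's in spelling out why $F\cap\cS_n = \bigcup_i R_{n,i}$ (via disjointness of the annuli) and in handling the possible non-attainment of the infimum with the $+2^{-n}$ term, but the structure is identical.
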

\begin{proof}
Set $\Dimh E=a$.  For any covering $(\cR_n)_{n\ge 1}$ of $E$,  by monotonicity,  $\Dimh \left(\bigcup_{n\geq 1}\bigcup_{R\in\cR_n} R\right)\ge a$.  To get the converse inequality, observe that the definition of the macroscopic Hausdorff dimension implies that  for any $\ep>0$,  there exists a covering $(\cR^0_n)_{n\ge 1}$ of $E$ with intervals of length multiple of $n^2$ in $\cS_n$ such that $$\sum_{n=1}^\infty \sum_{R\in\cR^0_n} \left(\frac{|R|}{2^n}\right)^{a+\ep}<\infty.$$ Therefore,  $\Dimh\left(\bigcup_{n\geq 1}\bigcup_{R\in\mathcal R_n^0} R\right) \le a+\ep$.  This proves the claim.
\end{proof}

\medskip

We now prove that the macroscopic Hausdorff dimension of Brownian level set is 1/2 over a semi-infinite interval.

\begin{lemma}
\label{lem14}
For every $x_0\in \mathbb{R}$, almost surely,  $\Dimh \mathcal{Z}_x\ge 1/2$ either for all $x\geq x_0$ or for all $x\leq x_0$.
\end{lemma}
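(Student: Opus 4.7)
The plan is to argue by contradiction, combining an a.s.\ lower bound on $\Dimh \cZ_{x_0}$ with a construction that transforms hypothetical small coverings of $\cZ_{x_1}$ and $\cZ_{x_2}$ (for some random $x_1 > x_0 > x_2$) into a small covering of $\cZ_{x_0}$, using Lemma \ref{lem12}.

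I would first establish that, for each fixed $x_0$, almost surely $\Dimh \cZ_{x_0} \ge 1/2$. The hitting time $T_{x_0}=\inf\{t \ge 0: B_t = x_0\}$ is finite a.s. by recurrence of $B$. By the strong Markov property, $\widetilde B_s := B_{T_{x_0}+s}-x_0$ is a standard Brownian motion, so $\cZ_{x_0}\cap [T_{x_0},\infty) = T_{x_0}+\widetilde \cZ_0$ for its zero set $\widetilde \cZ_0$. Since the macroscopic Hausdorff dimension is insensitive to bounded parts of the set (cf. Remark \ref{rem_3}), the lower bound $\Dimh \widetilde \cZ_0 \ge 1/2$ established in the proof of Theorem \ref{main} transfers: $\Dimh \cZ_{x_0} \ge 1/2$.

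Suppose now, for contradiction, that on an event of positive probability there exist $x_1>x_0>x_2$ with $\Dimh \cZ_{x_i}<1/2$ for $i=1,2$. Pick $\rho \in (\max_i \Dimh \cZ_{x_i}, 1/2)$ and, by Lemma \ref{lem:6}, fix coverings $(\cR_n^i)_{n\ge 1}$ of $\cZ_{x_i}\cap \cS_n$ by intervals of length a multiple of $n^2$ with $\sum_n \sum_{R \in \cR_n^i}(|R|/2^n)^\rho<+\infty$. By Lemma \ref{lem12}, for every $n$ large enough (depending on $\omega, x_1, x_2$), any interval $I$ of length $n^{3/2}$ in (or straddling) $\cS_n$ contains a unit subinterval with $\mathrm{Osc}_I B \ge \log\log n$. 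Choose such an $I \ni t^*$ for each $t^*\in \cZ_{x_0}\cap \cS_n$; since $B_{t^*}=x_0$ and $\max_I B - \min_I B \ge \log\log n \ge 2\max(x_1-x_0, x_0-x_2)$ for $n$ large, either $\max_I B\ge x_1$ or $\min_I B\le x_2$. By continuity of $B$, $I$ meets $\cZ_{x_1}\cup \cZ_{x_2}$, so $t^*$ lies within distance $n^{3/2}$ of some $R \in \cR_{n'}^i$ with $n'\in\{n-1,n,n+1\}$.

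Enlarging each such $R$ by $n^{3/2}$ on both sides, then rounding to integer endpoints with length a multiple of $n^2$ (splitting across annulus boundaries if necessary), produces intervals $\widetilde R$ with $|\widetilde R|\le C|R|$, because $n^{3/2}\ll n^2\le |R|$. The resulting family $\widetilde \cR_n$ covers $\cZ_{x_0}\cap \cS_n$ for all $n$ large, and
\[
\sum_n \sum_{\widetilde R \in \widetilde \cR_n}\Big(\frac{|\widetilde R|}{2^n}\Big)^\rho \le C' \sum_{n'}\sum_{i=1,2}\sum_{R \in \cR_{n'}^i}\Big(\frac{|R|}{2^{n'}}\Big)^\rho<+\infty,
\]
with $C'$ absorbing the enlargement ratio and the three-fold overlap from adjacent annuli. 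Lemma \ref{lem:6} then gives $\Dimh \cZ_{x_0}\le\rho<1/2$, contradicting the first step. I expect the main obstacle to be the bookkeeping in this enlargement: turning each $n^{3/2}$-padded $R$ into a valid covering interval (integer endpoints, length multiple of $n^2$, contained in a single annulus after splitting) while keeping both the length inflation $|\widetilde R|/|R|$ and the multiplicity across adjacent annuli bounded by absolute constants, uniformly in $n$.
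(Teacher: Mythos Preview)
Your argument is correct and uses the same oscillation lemma (Lemma~\ref{lem12}) as the paper, but the overall architecture is genuinely different. The paper proceeds \emph{constructively}: it fixes a covering of $\cZ_{x_0}\cap\cS_n$ by intervals of length a multiple of $n^2$, splits each covering interval into contiguous blocks $Q$ of length exactly $n^2$, and sorts those blocks into two families $\cQ_n^+$, $\cQ_n^-$ according to whether $B(Q)$ covers $[x_0,x_0+\tfrac12\log\log n]$ or $[x_0-\tfrac12\log\log n,x_0]$. Since $\cZ_{x_0}\subset\cZ^+\cup\cZ^-$ where $\cZ^\pm=\bigcup_n\bigcup_{Q\in\cQ_n^\pm}Q$, one of $\Dimh\cZ^\pm$ is at least $1/2$; say $\Dimh\cZ^+\ge 1/2$. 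Then for every $x>x_0$ and every covering $(\cR_n)$ of $\cZ_x$ by intervals of length a multiple of $n^2$, enlarging each $R\in\cR_n$ by $n^2$ on both sides covers all of $\cZ^+$ (for $n$ large), and Lemma~\ref{lem:6} gives $\Dimh\cZ_x\ge\Dimh\cZ^+\ge 1/2$.

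Your approach is the contrapositive of this: you assume small coverings exist for some $x_1>x_0$ \emph{and} some $x_2<x_0$, and push them back to a small covering of $\cZ_{x_0}$. This avoids introducing the auxiliary sets $\cZ^\pm$ and the dichotomy is hidden in the ``either $\max_I B\ge x_1$ or $\min_I B\le x_2$'' step. The paper's route has the advantage of being constructive (it tells you explicitly which half-line works, via the sign of the dominant oscillation) and of working with the fixed scale $n^2$ throughout, so no annulus-boundary bookkeeping is needed. Your route is slightly more economical in that it treats both levels $x_1,x_2$ symmetrically in one enlargement step; the price is exactly the bookkeeping you flag at the end (handling $t^*$ near $\partial\cS_n$, rounding $n^{3/2}$-enlargements up to multiples of $n^2$, and controlling the overlap across $\cS_{n-1},\cS_n,\cS_{n+1}$), all of which is routine since $n^{3/2}\le n^2\le|R|$.
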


The main idea to prove this lemma is to use the oscillation property of the Brownian motion (Lemma \ref{lem12}): 
almost surely, for every  large integer $n$, for all times $t \in \mathcal{S}_n$, the oscillation of $B$ on $[t,t+n^{3/2}]$ is larger than $\log\log n$. 
In particular, when $B_t=0$,  this implies that  one of the two intervals $[-(\log\log n)/2,0]$ or $[0,(\log\log n)/2]$ is included in $B( [t,t+n^{3/2}])$. Hence, either every $x \in [-(\log\log n)/2,0]$ or  every $x \in [0,(\log\log n)/2]$ can be written $B_{t'}$ for some $t'\in [t,t+n^{3/2}]$. This happens, almost surely,  every time $B$ touches 0 on $\mathcal{S}_n$, since the oscillation property of Lemma \ref{lem12} holds for almost every trajectory, for every   $n$.
In other words, each time $B$ touches $0$ ,  the image of $B$ covers  one large interval around 0 (either in $\RR^+$ or in $\RR^-:=(-\infty,0]$), this interval becoming larger  and tending to infinity when $n$ grows to infinity. 

We prove that this implies that the macroscopic dimension of  the level sets $\mathcal{Z}_x$ are necessarily greater than the macroscopic dimension of $\mathcal{Z}_0$.

\begin{proof}[Proof of Lemma \ref{lem14}]
Let $n\ge N$ where $N$ is the integer given in Lemma \ref{lem12}.  We prove the Lemma for $x_0=0$. One knows that  $\Dimh \mathcal{Z}_0=1/2$ almost surely.

 Let  $$\cQ_n = \{Q_{n,i}: i=1, ..., m_n\}$$
    be a finite family of intervals of length multiple of  $n^2$ inside $\cS_n$.   Each $Q_{n,i}$ can be split into a finite number of contiguous intervals $Q$ of length $n^2$.
   
   Consider one of these intervals $Q$ of length $n^2$ ($>n^{3/2}$). If $Q$ intersects $\mathcal{Z}_0$,  by Lemma \ref{lem12}, there exists an interval $I\subset Q$ of length 1 on which the oscillation of $B$ is greater than $\log\log n$. Hence,  (at least) one of the two intervals $[0,\log\log n/2]$ or $[-\log\log n/2,0]$ is necessarily included in  $B({Q})$.  So $Q\cap \mathcal{Z}_x\neq\emptyset$, simultaneously for all $x\in [0,\log\log n/2]$ or for all $x\in [-\log\log n/2,0]$.

We introduce two family of sets by selecting among contiguous intervals $Q$ of length $n^2$ inside $Q_{n,i}$ according to the behavior of $B$ on these intervals: 
\begin{align}
\label{defqni}
\cQ_{n,i}^+ &= \Big\{ Q\subset Q_{n,i}:  |Q|=n^2, \, Q\cap \mathcal{Z}_0 \neq \emptyset, \,  [0,\frac 1 2 \log\log n] \subset B(Q) \Big\},  \nonumber \\
 \cQ_{n,i}^- &= \Big\{ Q\subset Q_{n,i}:  |Q|=n^2, \, Q\cap \mathcal{Z}_0 \neq \emptyset, \,  [-\frac 1 2 \log\log n, 0] \subset B(Q) \Big\}.
\end{align}

Observe that for each $Q\subset Q_{n,i}$, only four cases may occur \begin{itemize}
\item $Q\in \cQ_{n,i}^+ \cap \cQ_{n,i}^-$, if $B(Q)\supset [- \log\log n /2, \log\log n /2]$.
\item
$Q\in \cQ_{n,i}^+ \setminus \cQ_{n,i}^-$ if $B(Q)\supset [0, \log\log n/2]$ and $ [-\log\log n/2, 0)\setminus B(Q)\neq \emptyset$.
\item
$Q\in \cQ_{n,i}^- \setminus \cQ_{n,i}^+$ if $B(Q)\supset [- \log\log n/2,0]$ and $(0, \log\log n/2]\setminus B(Q)\neq \emptyset$.
\item
$Q\cap \cZ_0=\emptyset$. 
\end{itemize}

Finally we set  
\begin{equation}
\label{defqn+}
\begin{split}
\cQ_{n}^+ = \{ Q: Q\in \cQ_{n,i}^+ \mbox{ for some } i \}, \\
 \cQ_{n}^- = \{ Q: Q\in \cQ_{n,i}^- \mbox{ for some } i \}.
  \end{split}
\end{equation}
and
\begin{equation}
\cZ^+ = \bigcup_{n\ge 1}\bigcup_{Q\in\cQ_n^+} Q,  \quad \cZ^- =\bigcup_{n\ge 1} \bigcup_{Q\in\cQ_n^-} Q.
\end{equation}

\begin{remark}
\label{rk3}
The construction ensures that for every $x\in [0,\log\log n/2]$,  if $\cR_n$ is a covering of  $\cZ_x\cap \cS_n$  by intervals $I$ of size that are multiple of $n^2$, then $$\bigcup_{I\in \cR_n} (I+[-n^2, n^2])\supset \bigcup_{Q\in \cQ_{n}^+} Q.$$ 
 The situation for $x\in [-\log\log/2, 0]$ is similar. 
 \end{remark}

Now we choose $\cQ_n=(Q_{n,i})_{i=1,...,m_n}$ so that it forms a covering of $\cZ_0\cap \cS_n$,  for every $n\ge 1$.  
It is obvious from the construction that $$\cZ_0 \subset \bigcup_{n\ge 1}\bigcup_{Q\in\cQ_{n,i}^+\cup \cQ_{n,i}^-} Q=\cZ^+\cup \cZ^-.$$
By the monotonicity of the macroscopic Hausdorff dimension,  either $\Dimh\cZ^+\ge  \frac 1 2$, or $\Dimh\cZ^-\ge \frac 1 2$.  

\medskip

Assume  that $\Dimh\cZ^+ \ge \frac 1 2$.

 Let $x>0$,  by  Remark \ref{rk3},  for any covering $(\cR_n)_{n\ge 1}$ of $\cZ_x$,   $\Dimh\cZ^+ =  \Dimh \cZ^+\cap [e^{e^x}, \infty) \le 
\Dimh (\bigcup_n\bigcup_i R_{n,i}+[-n^2, n^2])$.  It is clear that enlarging each interval $R_{n,i}$  in the covering by two intervals of length $n^2$ on both sides does not change the  macroscopic Hausdorff dimension, as explained below.
\begin{remark}
Observe that by changing $R_{n,i} $ into $\widetilde R_{n,i}:= R_{n,i}+ [-n^2, n^2]$,  one has
$$\sum_i \left( \frac{|  R_{n,i}|}{2^n}\right)^\rho  \leq \sum_i \left( \frac{|  \widetilde R_{n,i}|}{2^n}\right)^\rho \leq 3^\rho \sum_i \left( \frac{|  R_{n,i}|}{2^n}\right)^\rho,$$
  and  the nature (convergence/divergence) of the series involved in the computation of the macroscopic dimension are unchanged. 
\end{remark}
Therefore, by Lemma \ref{lem:6},  for all $x>0$,
\begin{align*}
\frac 1 2 \le \Dimh \cZ^+ = \Dimh \cZ^+\cap [e^{e^x},\infty) \le \inf\Dimh (\bigcup_n\bigcup_i R_{n,i}) = \Dimh \cZ_x,
\end{align*}
where the infimum is taken over all the possible coverings $(\cR_n)_{n\ge 1}$ of $\cZ_x$ with intervals of length multiple of $n^2$ in $\cS_n$.  

The proof for the case $\Dimh \cZ^-\ge \frac 1 2$ is similar.  
\end{proof}

Now we prove   Theorem \ref{main2}.

\begin{proof}

For the upper bound, observe that for every $x\in \R$, $\mathcal{Z}_x$ is ultimately included in $E(\gamma)$ for every $0<\gamma<\frac 1 2$. From this and  Theorem \ref{main} (formula \eqref{dimh}), we deduce that  Dim$_H(\mathcal{Z}_x)\leq 1/2$.

\medskip

We turn to the lower bound.

Let $(x_i)_{i\geq 1}$ be a dense sequence of real numbers. With probability one, the results of Lemma \ref{lem14} apply to all the $x_n$'s simultaneously:  almost surely for every $i$, either $\Dimh(\cZ_x)\ge 1/2\mbox{ for all } x\le x_i$ or $\Dimh(\cZ_x)\ge 1/2\mbox{ for all } x\ge x_i$.

\smallskip

Now, fix $\rho \in [0,1/2)$. Remark that the mapping $(x,\omega)\in \R\times\Omega \longmapsto \nu_\rho^n(\cZ_x)$ is measurable with respect to $\mathcal{B}(\R)\times \mathcal{F}$, where $\mathcal F$ is the minimal augmented $\sigma$-algebra generated by $B$. This simply follows from the fact that $\nu^n_\rho$ is in fact a minimum of a finite number of terms, since the number of possible partitions of $\mathcal{S}_n$ is finite (the intervals $Q_i$ having integer boundaries) and the dependence on $x$ is clearly measurable (indeed, $\mathcal{Z}_x=B^{-1}(\{x\})$).

  For each positive integer $i$, let $X_i = {\bf{1 \!\!\! 1}}_{\mathcal{A}_i}$, where $\mathcal{A}_i $ is the event    
  $$ \mathcal{A}_i =  \left\{\inf \left\{ \sum_{n\ge 1} \nu^n_\rho(\cZ_{x}) : \, {x\le x_i} \right\}  =+\infty\right\}.$$
 By the previous remark,  the sequence of random variables $(X_i)_{i\ge 1}$ is  measurable with respect to the tail $\sigma$-algebra generated by $B$.  Define 
$$Y=\begin{cases}\sup\{x_i: X_i=1\}, &\mbox{ if there exists } i \mbox{ such that } X_i=1, \\ -\infty, &\mbox{ otherwise. } \end{cases}$$
Then $Y$ is also measurable with respect to the tail $\sigma$-algebra. The Kolmogorov 0-1 law implies that  there is a deterministic number $y \in \mathbb{R}\cup\{\pm\infty\}$ such that $Y = y$ a.s.  Three cases are possible: 
 \smallskip
 
{\bf (i)  $y=+\infty$:} almost surely, for all $x\in\RR$, $\sum_{n\ge 1} \nu^n_\rho(\cZ_{x})=\infty$ which implies that $\Dimh \cZ_x\ge \rho$ for all $x$.

\smallskip
 
{\bf (ii) $y=-\infty$:} $X_i=0$ for all $i\in\NN$.   Lemma \ref{lem14} then gives that for all $i$,  $\Dimh \cZ_x\ge 1/2$ for all $x\ge x_i$, hence a contradiction since that would imply $y=+\infty$.

\smallskip
 
{\bf (iii) $y\in\RR$:}  again by Lemma \ref{lem14}, we have :\begin{itemize}
\item
 for all $x_i<y$ and all $x\le x_i$, $\Dimh \cZ_x\ge\rho$ 
 \item  for all $x_i>y$, for all $x\ge x_n$,     $\Dimh \cZ_x\ge 1/2$. 
\item In addition, still by Lemma \ref{lem14} for the fixed deterministic level $y$, almost surely, $\Dimh \cZ_y\ge 1/2$.
\end{itemize}
To conclude, in any case, almost surely, $\Dimh \cZ_x\ge \rho$ for all $x\in\RR$. 

Letting $\rho\uparrow 1/2$ concludes the proof. 
\end{proof}

\section{Concluding remark}

In this article, we considered the large scale structure of certain sojourn sets where a Brownian motion visits ``exceptionally" small values in a large scale dimension sense.  Along the way,  we use quite specific properties of the Brownian motion, for instance, the explicit probability density function of Brownian motion   in Lemma \ref{hittingbm}. It would be of interest to extend the results here to general  L\'evy processes and more general Gaussian processes.  

\section*{Acknowledgments}
The research of S. Seuret was partly supported by the grant ANR MUTADIS ANR-11-JS01-0009. The research of X. Yang was partly supported by DIM grants from R\'egion Ile-de-France. We   thank  Ivan Nourdin,   Giovanni Peccati and Yimin Xiao for stimulating and enjoyable discussions. We are indebted to the anonymous referee for her/his valuable suggestions. 

\bibliographystyle{plain}
\bibliography{xyangbiblio}

\end{document}